\newcommand\myshade{100}
\colorlet{mylinkcolor}{NavyBlue}
\colorlet{mycitecolor}{NavyBlue}
\colorlet{myurlcolor}{NavyBlue}
\newtheorem{proposition}{Proposition}[section]
\newtheorem{lemma}[proposition]{Lemma}
\newtheorem{theorem}[proposition]{Theorem}
\newtheorem{corollary}[proposition]{Corollary}
\theoremstyle{definition}
\newtheorem{definition}[proposition]{Definition}
\newtheorem{example}[proposition]{Example}
\theoremstyle{remark}
\newenvironment{remark}
  {\pushQED{\qed}\remarkx}
  {\popQED\endremarkx}
\newcommand{\g}{\mathfrak{g}}
\renewcommand{\a}{\mathfrak{a}}
\newcommand{\m}{\mathfrak{m}}
\newcommand{\A}{\mathcal{A}}
\newcommand{\B}{\mathcal{B}}
\newcommand{\D}{\mathcal{D}}
\newcommand{\E}{\mathcal{E}}
\renewcommand{\L}{\mathcal{L}}
\newcommand{\M}{\mathcal{M}}
\newcommand{\N}{\mathcal{N}}
\newcommand{\KK}{\mathbb{K}}
\newcommand{\NN}{\mathbb{N}}
\newcommand{\RR}{\mathbb{R}}
\newcommand{\ZZ}{\mathbb{Z}}
\DeclareMathOperator{\At}{At}
\DeclareMathOperator{\End}{End}
\DeclareMathOperator{\ev}{ev}
\DeclareMathOperator{\Hom}{Hom}
\DeclareMathOperator{\id}{id}
\DeclareMathOperator{\pr}{pr}
\DeclareMathOperator{\sgn}{sgn}
\newcommand{\ol}{\overline}
\newcommand{\red}[1]{{#1}_{\textrm{red}}}  
\newcommand{\gm}[1]{\underline{#1}}  
\newcommand{\pmap}{\iota}  
\newcommand{\pb}[1]{\frac{\partial}{\partial b^{#1}}}  
\newcommand{\ka}{\kappa}  
\newcommand{\Clin}{C^\infty_{\text{lin}}}  
\newcommand{\DA}{D_A}
\newcommand{\DB}{D_B}
\newcommand{\XA}{X_A}
\newcommand{\XB}{X_B}
\title[Atiyah classes and dg-Lie algebroids for matched pairs]{Atiyah classes and dg-Lie algebroids \\ for matched pairs}
\date{\today}
\subjclass[2010]{58A50, 17B70, 16E45, 53C05, 53C12}
\keywords{Differential graded manifolds, Atiyah classes, Lie algebroids, Fedosov resolutions}
\author{Panagiotis Batakidis}
\address{University of Cyprus, 1 Panepistimiou Avenue 
2109 Aglantzia, Nicosia, Cyprus}
\email{batakidis@gmail.com}
\author{Yannick Voglaire}
\address{University of Luxembourg, 6 Avenue de la Fonte, L-4364 Esch-sur-Alzette, Luxembourg}
\email{yannick.voglaire@gmail.com}
\thanks{We would like to extend our warm thanks to Ping Xu for suggesting the problem, and to Mathieu Stiénon and Ping Xu for many enlightening discussions.
We thank Tiffany Covolo, Nguyen Viet Dang, Owen Gwilliam, Benoît Jubin, Stephen Kwok, Camille Laurent-Gengoux, Rajan Mehta, François Petit, and Florian Schätz for discussions or correspondence about related subjects.
We are grateful to Damien Calaque who kindly commented on our first preprint and suggested us a relation to dDG-algebras.
P.B.\ is grateful to the Pennsylvania State University, where he was staying while most of this project was done, for the excellent working conditions.
He also thanks the Max Planck Institute for Mathematics in Bonn for its kind hospitality.
Y.V.\ is grateful to the Max Planck Institute for Mathematics in Bonn, where he was staying while most of this project was done, for the excellent working conditions.
}
\begin{document}


\begin{abstract}
For every Lie pair $(L,A)$ of algebroids we construct a dg-manifold structure on the $\ZZ$-graded manifold $\M=L[1]\oplus L/A$ such that the inclusion $\iota: A[1] \to \M$ and the projection $p:\M\to L[1]$ are morphisms of dg-manifolds.
The vertical tangent bundle $T^p\M$ then inherits a structure of dg-Lie algebroid over $\M$.
When the Lie pair comes from a matched pair of Lie algebroids, we show that the inclusion $\iota$ induces a quasi-isomorphism that sends the Atiyah class of this dg-Lie algebroid to the Atiyah class of the Lie pair.
We also show how (Atiyah classes of) Lie pairs and dg-Lie algebroids give rise to (Atiyah classes of) dDG-algebras.
\end{abstract}

\maketitle

\microtypesetup{protrusion=false} 
\tableofcontents 
\microtypesetup{protrusion=true} 


\section{Introduction}
\label{sec:introduction}

Atiyah classes form a bridge between complex geometry and Lie theory.
They were introduced by Atiyah \cite{atiyah_complex_1957} as the obstruction to the existence of a holomorphic connection on a complex manifold.
Much later, it was shown by Kapranov \cite{kapranov_rozansky-witten_1999} that the Atiyah class of a complex manifold $X$ endows the shifted holomorphic tangent bundle $T_X[-1]$ with a Lie algebra structure in the derived category  of coherent sheaves of $\mathcal{O}_X$-modules.
This structure plays an important role in the construction of Rozansky--Witten invariants \cite{kapranov_rozansky-witten_1999,kontsevich_rozansky-witten_1999}, which are parity-shifted analogues of the Chern--Simons invariants of three-manifolds.
In his work on the deformation quantization of Poisson manifolds \cite{kontsevich_deformation_2003}, Kontsevich provides another deep link between complex geometry and Lie theory, by relating a Lie algebraic analogue of the Todd class to the Duflo--Kirillov isomorphism.
The above-mentioned works of Kontsevich and Kapranov sparked a wealth of further investigation, which resulted in vast generalizations of the notion of Atiyah class \cite{calaque_hochschild_2010,chen_atiyah_2012,costello_geometric_2011,gwilliam_one-dimensional_2014,mehta_atiyah_2015,shoikhet_duflo_1998}.

The present paper intends to relate two such generalizations in the context of differential geometry.
The first, due to Chen--Stiénon--Xu \cite{chen_atiyah_2012}, is concerned with Lie pairs $(L,A)$, i.e.\ inclusions $A\subset L$ of Lie algebroids over the same base. 
The Atiyah class is then an element $\alpha_{(L,A)}$ in
\[ H^1(A, \Hom(L/A \otimes L/A, L/A)) . \]
The second, due to Mehta--Stiénon--Xu \cite{mehta_atiyah_2015}, is concerned with dg-Lie algebroids, i.e.\ graded Lie algebroids with a compatible homological vector field.
The Atiyah class of a dg-Lie algebroid $\A$ is an element $\alpha_\A$ in
\[ H^1(\Gamma(\Hom(\A\otimes \A, \A))) . \]

Our first main result realizes any Lie pair as a dg-Lie algebroid.
The homological vector fields involved are obtained as Fedosov differentials \cite{fedosov_simple_1994,dolgushev_covariant_2005}.
This result was independently proved by Stiénon--Xu \cite{stienon_fedosov_2016}.

\begin{theorem}
\label{thm:1st-main-theorem}
  Let $(L,A)$ be a Lie pair, $\phi:L/A\to L$ a splitting of the canonical projection, and $\nabla$ a torsion-free $L$-connection on $L/A$ extending the $A$-action.
  Then
  \[ \M = L[1] \oplus L/A \]
  has a dg-manifold structure such that the inclusion $\iota: A[1] \to \M$ and the projection $p:\M\to L[1]$ are morphisms of dg-manifolds, and the vertical tangent bundle
  \[ \D = T^p\M \to \M \]
  has a dg-Lie algebroid structure over $\M$.
  Both structures depend on $\phi$ and $\nabla$.
\end{theorem}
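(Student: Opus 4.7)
I would construct the homological vector field $Q$ on $\M$ by a covariant Fedosov-type iterative scheme and then read off the morphism properties and the dg-Lie algebroid structure on $\D$ from the shape of that construction. Via $\phi$, identify $L \cong A \oplus L/A$, so that the structure sheaf of $\M = L[1]\oplus L/A$ becomes $\Gamma(\wedge^\bullet L^* \otimes \widehat{S}^\bullet (L/A)^*)$; in a local frame this is generated over $C^\infty(M)$ by degree-$1$ generators $\xi^i \in A^*$, $\xi^a \in (L/A)^*$ and degree-$0$ fiber coordinates $y^a$ on $L/A$. Under this identification, $p^*$ is the inclusion $\Gamma(\wedge^\bullet L^*) \hookrightarrow \mathcal{O}_\M$ and $\iota^*$ is the augmentation sending every $\xi^a$ and $y^a$ to $0$.

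\textbf{Three-term ansatz for $Q$.} The core step is to introduce three degree-$+1$ derivations: a Koszul-type differential $\delta$, locally given by $\xi^a \partial_{y^a}$, which satisfies $\delta^2 = 0$ and admits a globally defined contracting homotopy $h$ with $\delta h + h\delta = \mathrm{id} - \pi$, where $\pi$ is the projection onto $\Gamma(\wedge^\bullet A^*)$; a connection-type derivation $d^\nabla$ assembled from $\nabla$ that reproduces the Chevalley--Eilenberg operator of $L$ on the exterior generators and differentiates the $y^a$ using $\nabla$; and a fiber-vertical correction $X = \sum_{k \ge 2} X^{(k)}$ of positive total $y$-degree. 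Set $Q := -\delta + d^\nabla + X$. Expanding $Q^2 = 0$ yields a Maurer--Cartan-type equation
\[ \delta X = (d^\nabla)^2 + [d^\nabla, X] + \tfrac12 [X, X] + R, \]
where $R$ collects the curvature and torsion contributions of $\nabla$. One would solve it inductively by total $y$-degree using $h$ as propagator and imposing the Fedosov gauge $h X = 0$ to fix the solution. Torsion-freeness of $\nabla$ is exactly what places the lowest-order obstruction in $\mathrm{Im}\,\delta$, and since $h$ strictly raises $y$-degree the scheme converges formally in $\widehat{S}^\bullet (L/A)^*$.

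\textbf{Morphism and dg-Lie algebroid properties.} By construction, $\delta$ and $X$ annihilate $p^*\Gamma(\wedge^\bullet L^*)$ (no $y$'s on which to act), while on $y$-independent generators $d^\nabla$ reduces to $d_L$ by the way it was assembled from the splitting; hence $Q \circ p^* = p^* \circ d_L$, so $p$ is a dg-morphism. For $\iota$, one checks on generators that $\iota^* \circ Q = d_A \circ \iota^*$: the $\delta$-terms produce $\xi^a$ and are killed; the $y$-dependent output of $d^\nabla$ and all of $X$ vanish after $\iota^*$; on the remaining $\xi^i$-part, the fact that $A \subset L$ is a subalgebroid (so the $(L/A)$-component of $[A,A]$ vanishes) together with the fact that $\nabla$ extends the $A$-action reproduce exactly $d_A$. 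Finally, for any submersion of graded manifolds $T^p \M$ is canonically a Lie algebroid over $\M$; because $Q$ is $p$-projectable, Lie derivative along $Q$ preserves vertical vector fields and defines a degree-$1$ square-zero derivation of this Lie algebroid, which is the desired dg-Lie algebroid structure on $\D$.

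\textbf{Main obstacle.} The principal technical step is the inductive construction of $X$: at each order one must verify that the obstruction lies in $\ker \pi \cap \mathrm{Im}\,\delta$, a check that rests on torsion-freeness of $\nabla$, its compatibility with the $A$-action, and Bianchi-type identities for its curvature. Once $Q$ is in hand, the morphism and dg-Lie algebroid properties follow essentially by generator-by-generator bookkeeping within the splitting $L = A \oplus L/A$.
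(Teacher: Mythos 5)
Your proposal is correct and follows essentially the same route as the paper: the ansatz $Q=-\delta+\nabla^L+X$ with the gauge $\kappa(X)=0$, the iterative solution of the Maurer--Cartan equation via the contracting homotopy (with torsion-freeness giving $[\nabla^L,\delta]=0$ and the Bianchi identities ensuring the obstruction vanishes), the generator-by-generator check that $\iota$ and $p$ are dg-morphisms, and the observation that $L_Q$ preserves the vertical distribution $T^p\M$. The only cosmetic difference is that the paper verifies $Q^2=0$ in one stroke via the ``Fedosov trick'' ($D(C)=0$, $\kappa(C)=0$, $\iota^*(C)=0\Rightarrow C=0$) rather than checking the obstruction order by order.
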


A particularly nice family of Lie pairs arises from the so-called matched pairs of Lie algebroids \cite{mokri_matched_1997}.
Two Lie algebroids $A$ and $B$ over the same base form a matched pair if their (vector bundle) direct sum $L=A\oplus B$ is endowed with a structure of Lie algebroid in which $A$ and $B$ are Lie subalgebroids. Given a matched pair as above, we get a Lie pair $(L,A)$ for which $L/A\cong B$. 

\begin{example}
  Let us give here a few examples of matched pairs of Lie algebroids for which Fedosov-type methods have been used (see the reference for each example below), and on which we largely draw in the context of this paper.
	\begin{enumerate}
	  \item $A=0$, $B=TM$ for a manifold $M$
  	  (Dolgushev 
	    \cite{dolgushev_covariant_2005}).
	  \item $A=M\times \g$, $B=TM$ for a manifold $M$ with a $\g$-action
	    (Dolgushev 
	    \cite{dolgushev_covariant_2005}).
	  \item $A=T^{0,1}X$, $B=T^{1,0}X$ for a complex manifold $X$ 
	    (Calaque--Rossi 
	    \cite{calaque_lectures_2011}).
	  \item $A=0$, $B=E$ for any Lie algebroid $E$ 
	    (Calaque--Dolgushev--Halbout 
	    \cite{calaque_formality_2007}).
	\end{enumerate}
\end{example}

Our second main result essentially says that, at least for matched pairs, the definition of Atiyah class for dg-Lie algebroids \cite{mehta_atiyah_2015} contains the definition of Atiyah class for Lie pairs \cite{chen_atiyah_2012}. 

\begin{theorem}
\label{thm:2nd-main-theorem}
  Let $L=A\oplus B$ be a matched pair of Lie algebroids, and $\nabla^B$ a torsion-free $B$-connection on $B$.
  Let $\D\to\M$ be the dg-Lie algebroid defined in Theorem~\ref{thm:1st-main-theorem}.

  Then the pullback by the inclusion $\iota:A[1]\to\M$ yields a quasi-isomorphism
  \[
    \iota^* : 
    (\Gamma(\Hom(\D \otimes \D, \D)),D)
    \to
    (\Gamma(\Lambda (A^\lor) \otimes \Hom(B\otimes B,B)),d_A)
  \]
  whose induced map in cohomology sends the Atiyah class $\alpha_\D$ of the dg-Lie algebroid $\D$ to the Atiyah class $\alpha_{(L,A)}$ of the Lie pair $(L,A)$.

  Here, $D$ is the dg-module structure on the space of sections of $\Hom(\D \otimes \D, \D)\to\M$ induced from the dg-manifold structure on $\D$, and $d_A$ is the Lie algebroid differential for the $\Hom(B\otimes B,B)$-valued $A$-cohomology.
\end{theorem}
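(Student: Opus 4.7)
The plan is to split the proof into two essentially independent parts: (i) showing that $\iota^*$ is a quasi-isomorphism of complexes, and (ii) identifying the image under $\iota^*$ of a cocycle representative of $\alpha_\D$ with one of $\alpha_{(L,A)}$.

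For (i), I would exploit the Fedosov-type nature of the dg-manifold structure on $\M=L[1]\oplus B$ produced by Theorem~\ref{thm:1st-main-theorem}. The function algebra decomposes as $C^\infty(\M)\cong \Gamma(\Lambda L^\lor\otimes \hat S(B^\lor))$ with a natural bigrading by $\Lambda L^\lor$-degree and $\hat S(B^\lor)$-degree, and the homological vector field $Q_\M$ splits as the sum of a Koszul-type differential in the $B[1]\oplus B$ directions transverse to $A[1]$, the Chevalley--Eilenberg differential of $A$, and Fedosov corrections of higher filtration degree. The Koszul-type piece is acyclic with cohomology identified, via restriction along $\iota$, with $\Gamma(\Lambda A^\lor)$. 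A standard spectral-sequence or Dolgushev-style contracting-homotopy argument then promotes this to a quasi-isomorphism at the chain level; the same argument goes through with coefficients in the module $\Gamma(\Hom(\D\otimes\D,\D))$, whose $\iota$-restriction equals $\Gamma(\Lambda A^\lor)\otimes\Hom(B\otimes B,B)$ since $\iota^*\D\cong A[1]\times B$ as graded vector bundles.

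For (ii), the idea is to compute, from the explicit construction underlying Theorem~\ref{thm:1st-main-theorem}, a $\D$-connection $\nabla^\D$ on $\D$ together with its Atiyah cocycle $\At_{\nabla^\D}\in\Gamma(\Hom(\D\otimes\D,\D))$, which measures the failure of $\nabla^\D$ to intertwine the homological vector fields on base and total space. Restricting $\nabla^\D$ along $\iota$ should recover the $L$-connection on $L/A\cong B$ determined by $\nabla^B$ together with the splitting $\phi$, and $\iota^*\At_{\nabla^\D}$ should then agree on the nose with the Atiyah cocycle of Chen--Stiénon--Xu built from that data. The key observation here is that the higher Fedosov corrections to both $Q_\M$ and $\nabla^\D$ lie in the ideal of functions vanishing along $A[1]$, so they are annihilated by $\iota^*$ and only the leading terms survive.

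The hardest step, as is typical in Fedosov-type arguments, is establishing the quasi-isomorphism in (i) uniformly with the $\Hom(\D\otimes\D,\D)$-valued coefficients and in a way compatible with the full twisted differential $D$, rather than just its leading Koszul part. Once this is in place, the identification of Atiyah classes reduces to a bookkeeping calculation using the explicit formulas for $Q_\M$ and for the dg-Lie algebroid structure on $T^p\M$, which should match the Chen--Stiénon--Xu representative essentially term by term.
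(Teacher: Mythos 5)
Your part (i) is essentially the paper's argument: for a matched pair the Fedosov differential $D$ splits into exactly two pieces $\DA+\DB$ of bidegrees $(1,0)$ and $(0,1)$ on $\Omega^{\bullet,\bullet}(\B)$, the $\DB$-cohomology is concentrated in degree zero by the contracting homotopy $\ka$ together with the recursive ``Fedosov trick'' (Lemma~\ref{lem:key}), and a spectral sequence plus an explicit chain-level quasi-inverse $\eta\circ\mu\circ\pi^*$ finishes the job. One point you should make explicit: it is precisely the matched-pair hypothesis that kills the bidegree-$(-1,2)$ component of $D$, so that $(\Omega^{\bullet,\bullet}(\B),\DA,\DB)$ is genuinely a double complex; for a general Lie pair this step fails, which is why the theorem is stated for matched pairs.

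Part (ii) contains a genuine error. You claim that the higher Fedosov corrections ``lie in the ideal of functions vanishing along $A[1]$, so they are annihilated by $\iota^*$ and only the leading terms survive.'' This is exactly backwards for the term that matters. Writing $D=\nabla^L-\delta+X$ with $X=\sum_{k\geq2}X_k$ and evaluating the Atiyah cocycle on the constant sections $\partial_i=\partial/\partial b^i$, the $\delta$-contributions vanish under $\iota^*$, and the $\nabla^L$-contributions produce only the exact term $d_A(S)(\iota^*\partial_i,\iota^*\partial_j)$ with $S$ the tensor of Lemma~\ref{lem:connection-is-bilinear}. If the Fedosov corrections really dropped out, you would conclude $\iota^*(\At^\nabla)=d_A(S)$, i.e.\ $\alpha_\D\mapsto 0$ in cohomology, contradicting the theorem whenever $\alpha_{(L,A)}\neq 0$. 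What actually happens is that the quadratic correction $X_2=\ka(R^\lor)$, although of $b$-degree $2$ and hence in the ideal you describe, contributes through the double contraction $\nabla_{\partial_i}[X_2,\partial_j]$: each of the two operations lowers the $b$-degree by one, so the result has $b$-degree zero, survives $\iota^*$, and equals $-\At^{\nabla^L}(\partial_i,\partial_j)$, entering the cocycle $D\nabla_{\partial_i}\partial_j-\nabla_{D\partial_i}\partial_j-\nabla_{\partial_i}D\partial_j$ with an extra minus sign. This term is the \emph{sole} source of the Chen--Sti\'enon--Xu cocycle. So the ``bookkeeping calculation'' you defer to does not consist of matching leading terms; it hinges on extracting the curvature from the first Fedosov correction, and the mechanism you state would miss it entirely.
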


In \cite[Section 8]{calaque_hochschild_2010}, Calaque and Van den Bergh introduce dDG-algebras and attach Atiyah classes to them.
We show that Lie pairs and dg-Lie algebroids both naturally define dDG-algebras through their Chevalley--Eilenberg algebras, and that the corresponding Atiyah classes in the sense of \cite{calaque_hochschild_2010} coincide with those of \cite{chen_atiyah_2012} and \cite{mehta_atiyah_2015}\footnote{In the case of matched pairs and dg-Lie algebroids, this was suggested to us by Damien Calaque after reading a first version of this manuscript.}.
In \cite[Section 8.4]{calaque_hochschild_2010}, the authors compute Atiyah classes using jet bundles in the case of a matched pair $A=0$, $B=E$ for a Lie algebroid $E$ and compute the Atiyah class of $E$ in terms of jet bundles. It would be interesting to extend our Theorem \ref{thm:2nd-main-theorem} to their setting and to extend their results to general Lie pairs.

The paper is structured as follows.
In Section \ref{sec:preliminaries}, we recall standard facts about Lie algebroids and matched pairs of Lie algebroids, we introduce dg-manifolds and dg-Lie algebroids, and describe the Atiyah classes we will be concerned with.
In Section~\ref{sec:dDG-algebras}, we relate Lie pairs, dg-Lie algebroids, and their Atiyah classes to dDG-algebras.
In Section~\ref{sec:DG-Lie-algebroids-from-Lie-pairs}, we prove Theorem~\ref{thm:1st-main-theorem}.
In Section~\ref{sec:quasi-isomorphism}, we prove Theorem~\ref{thm:2nd-main-theorem}.


\section{Preliminaries}
\label{sec:preliminaries}

In the whole paper, $\KK$ is either the field of real or complex numbers, 
$C^\infty(M)$ denotes the sheaf of smooth $\KK$-valued functions on a smooth manifold $M$,
and $T_\KK M$ denotes the tangent bundle $TM$ tensored with $M\times \KK$.
The sheaf of smooth sections of a vector bundle $E\to M$ is denoted by $\Gamma(M,E)$, or $\Gamma(E)$ when no confusion can arise.
The dual of a vector bundle $E$ is denoted by $E^\lor$.


\subsection{Lie algebroids and matched pairs}

A \emph{Lie algebroid} (over $\KK$) is a $\KK$-vector bundle $L\to M$ over a smooth manifold, together with a Lie bracket $[\cdot,\cdot]$ on the space $\Gamma(L)$ of sections of $L$, and a bundle map $\rho:L\to T_\KK M$ ---an \emph{anchor}--- satisfying the Leibniz rule
\[ [l,fl'] = f[l,l'] + \rho(l)(f) l' , \quad \forall l,l'\in\Gamma(L), f\in C^\infty(M) . \]

A \emph{(base-preserving) morphism of Lie algebroids} is a bundle map $L\to L'$ covering the identity $M\to M$, commuting with the anchors of $L$ and $L'$, and inducing a Lie algebra morphism $\Gamma(L)\to\Gamma(L')$ on sections.

For a Lie algebroid $L\to M$ and a vector bundle $E\to M$, an \emph{$L$-connection on $E$} is a $\KK$-linear map
\[ \nabla : \Gamma(L) \otimes_\KK \Gamma(E) \to \Gamma(E) : l\otimes e \mapsto \nabla_le \]
such that
\[
  \nabla_{fl}e = f\nabla_le , \qquad
  \nabla_lfe = f\nabla_le + \rho(l)(f)e,
\]
for all $l\in\Gamma(L)$, $e\in\Gamma(E)$, and $f\in C^\infty(M)$.
The \emph{curvature} of $\nabla$ is the bundle map $R^\nabla:L\wedge L \to \End(E)$ defined on sections by
\[ R^\nabla(l,l') = \nabla_l\nabla_{l'} - \nabla_{l'}\nabla_l - \nabla_{[l,l']}, \quad \forall l,l'\in\Gamma(L) . \]
The connection is called \emph{flat} if its curvature vanishes, and $E$ is then called an \emph{$L$-module}.

Any $L$-connection on $E$
\begin{enumerate}
  \item induces an $L$-connection on the dual vector bundle $E^\lor$ defined by
    $ \left< \nabla_l\epsilon,e\right> = \rho(l) (\left<\epsilon,e\right>) - \left< \epsilon, \nabla_le\right>$,
    for all $l\in\Gamma(L)$, $\epsilon\in\Gamma(E^\lor)$, and $e\in\Gamma(E)$;
  \item extends by derivations to the tensor algebra $T(E)$, the symmetric algebra $S(E)$, the symmetric algebra on the dual $S(E^\lor)$, the exterior algebra $\Lambda(E)$, etc.;
  \item when seen as a map 
  $\nabla:\Gamma(E)\to \Gamma(\Lambda^1 L^\lor \otimes E)$, 
  extends to an operator 
  $\nabla: \Gamma(\Lambda^\bullet L^\lor \otimes E) \to \Gamma(\Lambda^{\bullet+1} L^\lor \otimes E)$ 
  by the formula $\nabla(\omega\otimes e) = d\omega\otimes e + (-1)^n \omega \wedge \nabla e$ 
  for all $\omega\in\Lambda^n L^\lor$ 
  and $e\in \Gamma(E)$.\footnote{If $\nabla e = \sum_i \alpha_i \otimes e_i$, then $\omega\wedge \nabla e$ means $\sum_i (\omega\wedge \alpha_i) \otimes e_i$.}
\end{enumerate}

When $E$ is an $L$-module with flat connection $\nabla$, the degree 1 operator $d_E=\nabla$ on $\Gamma(\Lambda^\bullet L^\lor \otimes E)$ is a differential: $d_E\circ d_E = 0$. It is called the \emph{Lie algebroid differential} inducing the $E$-valued $L$-cohomology $H^\bullet(L,E)$. 
When $E$ is the trivial bundle $M\times \KK$, the anchor map defines a flat connection whose associated differential $d_L$ on $\Gamma(\Lambda^\bullet L^\lor)$ defines the Lie algebroid cohomology with trivial coefficients, $H^\bullet(L)$.

A \emph{Lie pair} \cite{chen_atiyah_2012} is a pair $(L,A)$ where $A$ is a Lie subalgebroid of a Lie algebroid $L$ over the same base.
Given a Lie pair $(L,A)$, the quotient $L/A$ is canonically an $A$-module with flat connection $\nabla^A_a\overline{l} = \overline{[a,l]}$. Here $l \mapsto \overline l$ is the projection $L\to L/A$.
An $L$-connection $\nabla$ on $L/A$ is said to \emph{extend the $A$-action} if $\nabla_a\overline l=\nabla^A_a\overline l$ for all $a\in\Gamma(A)$ and $l\in\Gamma(L)$.
The \emph{torsion} of an $L$-connection $\nabla$ on $L/A$ is the bundle map $T^\nabla:L\wedge L\to L/A$ defined on sections by
\begin{equation}
\label{eq:torsion}
  T^\nabla(l,l') = \nabla_l\overline{l'} - \nabla_{l'}\overline{l} - \overline{[l,l']} .
\end{equation}
The torsion tensor descends to a tensor $\overline T^\nabla:L/A\wedge L/A\to L/A$ if and only if the connection extends the $A$-action.
By the usual trick $\nabla\mapsto \nabla-\frac12 T^\nabla$, one can associate a torsion-free connection to any connection.

We will say that a Lie algebroid $L$ is \emph{formed from a matched pair} $(A,B)$ \cite{mokri_matched_1997} if $L$ is the direct sum (as a vector bundle) of two Lie subalgebroids $A$ and $B$. In that case, the Lie algebroid structure on $L$ induces an $A$-module structure $\nabla^A$ on $B$ and a $B$-module structure $\nabla^B$ on $A$ defined by
\begin{align*}
  \nabla^A_ab &= \pr_B[a,b], \\
  \nabla^B_ba &= \pr_A[b,a],
\end{align*}
for all $a\in\Gamma(A)$ and $b\in\Gamma(B)$. 
Here, $\pr_A,\pr_B$ denote the canonical projections from $L$ to $A$ and $B$, respectively.
Torsion-free $L$-connections $\nabla^{L}$ on $L/A\cong B$ extending the $A$-action are in bijection with torsion-free $B$-connections $\nabla^{B}$ on $B$.
The bijection is defined by $\nabla^{L} \longleftrightarrow \nabla^A + \nabla^{B}$.


\subsection{Differential graded manifolds}

In this section, we briefly explain the notions of differential graded manifold and differential graded Lie algebroid that we will use in this paper. 
There exist many variants of the notion of dg-manifold.
We follow \cite{mehta_atiyah_2015}\footnote{In addition to \cite{mehta_atiyah_2015}, we refer to \cite{mehta_algebroids_2009} for more details on the constructions presented in this section.}, whose definition might differ from others \cite{ciocan-fontanine_derived_2001, mehta_algebroids_2009, mehta_groupoids_2009, roytenberg_structure_2002, severa_some_2005, voronov_graded_2002} in at least two respects: the local model for the sheaf of functions on the underlying $\ZZ$-graded manifold is $C^\infty(U)\otimes \hat S(V^\lor)$, where $\hat S(V^\lor)$ are the \emph{formal polynomial functions} on a $\ZZ$-graded vector space $V$, and $V$ may have a \emph{non-trivial degree zero part}.
Two advantages of such requirements are the following.
First, working with formal polynomial functions ensures that the stalks are local rings.
Secondly, having degree zero formal variables allows to treat formal functions on a manifold simply as ``functions'' on a suitable graded manifold.

A \emph{$\ZZ$-graded manifold} $\M$ is a pair $(M,C^\infty(\M))$ composed of a smooth manifold $M$ and a sheaf $C^\infty(\M)$ of $\ZZ$-graded, graded-commutative algebras over $M$ such that there is a finite-dimensional $\ZZ$-graded $\KK$-vector space $V$ such that $C^\infty(\M)$ is locally isomorphic to $C^\infty(U)\otimes \hat S(V^\lor)$, $U\subset M$.
This means that, around each point $x\in M$, there exists an open neighborhood $U\subset M$ and an isomorphism $C^\infty(\M)|_U \to C^\infty(U)\otimes \hat S(V^\lor)$ of sheaves of graded algebras over $U$.

Above, a $\ZZ$-graded vector space $V=\oplus_{i\in\ZZ}V^i$ is called \emph{finite-dimensional} if $\sum_{i\in\ZZ}\dim V^i <\infty$.

All $\ZZ$-graded manifolds will be denoted by calligraphic letters $\M$, $\N$, \dots\ and the underlying smooth manifolds, called their \emph{body} or \emph{reduced manifold}, will be denoted by $\red{\M}$, $\red{\N}$, \dots. So, in the definition above, $\red{\M}=M$.

An \emph{open submanifold} of a $\ZZ$-graded manifold $\M$ is an open submanifold $U$ of $\red{\M}$ together with the restriction of the sheaf $C^\infty(\M)$ to $U$.

A \emph{morphism of $\ZZ$-graded manifolds} $\M\to \N$ is a pair $\phi=(f,\phi^*)$ where $f$ is a smooth map $\red{\M}\to \red{\N}$ and $\phi^*$ is a morphism of sheaves of $\ZZ$-graded algebras $C^\infty(\N)\to f_*C^\infty(\M)$. 

One can show  as in \cite[Section 6]{covolo_z2n_2014} that, although it is not apparent, there is a canonical augmentation map $C^\infty(\M) \to C^\infty(M)$, hence an inclusion $M\to \M$, that is compatible with all morphisms.

We will mostly consider, as in e.g.\ \cite[Appendix A]{cattaneo_relative_2007}, graded manifolds coming from ordinary graded vector bundles.
To an ordinary $\ZZ$-graded vector bundle $E=\oplus_{i\in\ZZ}E^i$ over $M$, we associate a graded manifold 
\begin{equation}
\label{eq:def-graded-manifold-from-graded-vector-bundle}
  \gm{E}=(M,C^\infty(\gm{E}))
\end{equation}
defined by $C^\infty(\gm{E})=\Gamma(M,\hat S(E^\lor))$ with augmentation given by evaluation at the zero section of $E$.
With the obvious map on morphisms, this yields a functor $E\mapsto \gm E$ from the category of graded vector bundles to that of graded manifolds.\footnote{In this language, the local model $(U,C^\infty(U)\otimes \hat S(V^\lor))$ in the definition of graded manifold is the graded manifold corresponding to the trivial vector bundle $U\times V$ over $U$.}

A \emph{dg-manifold} is a $\ZZ$-graded manifold $\M$ together with a homological vector field on $\M$, i.e.\ a derivation $Q$ of degree 1 of $C^\infty(\M)$ such that $Q\circ Q=0$.

A \emph{graded vector bundle} of rank $(k,\{k_i\}_{i\in\ZZ})$ over a graded manifold $\M$ is a graded manifold $\E$ with a morphism $\pi:\E\to\M$ and an atlas of local trivializations $\{\M_i,\phi_i\}_{i\in I}$ with, for each $i\in I$, $\M_i$ an open submanifold of $\M$ and $\phi_i:\pi^{-1}(\M_i)\to \M_i \times (\RR^k \times \KK^{\{k_i\}})$ an isomorphism of graded manifolds over a diffeomorphism $\red{\phi_i}:\red{(\pi^{-1}(\M_i))}\to \red{\M_i}\times \RR^k$, such that the transition functions are linear in the $\RR^k \times \KK^{\{k_i\}}$ coordinates.
Here, $\KK^{\{k_i\}}$ is the graded vector space $\oplus_{i\in\ZZ} \KK^{k_i}$ where each $\KK^{k_i}$ is in degree $i$.

It is clear from the definition of graded vector bundle that there is a well-defined subspace
\[ \Clin(\E) \subset C^\infty(\E) \]
of \emph{linear} functions on $\E$ (i.e.\ linear in the fibers of $\E\to\M$).

There is a degree-shift functor acting on the category of graded vector bundles.
First, define $\KK[-i]$ as the graded vector space with $\KK$ in degree $i$ and the zero vector space in all other degrees.
Given a graded manifold $\M$, define the (trivial) graded vector bundle $[i]_\M$ over $\M$ by $[i]_\M=\M\times(\RR\times \KK[i])$. 
So $[0]_\M$ is the unit for the monoidal category of graded vector bundles over $\M$ with the tensor product.
The \emph{$i$-th shift functor} $[i]$ on the category of graded vector bundles over $\M$ sends $\E$ to $\E[i]=\E\otimes [i]_\M$.

There is a canonical isomorphism $\phi_j:\Clin(\E)\to\Clin(\E[j])$ of degree $j$ of $C^\infty(\M)$-modules defined by $\phi_j(\epsilon) = \epsilon \chi_j$, where $\chi_j\in \Clin([j]_\M)$ is the linear coordinate on $\KK[j]$ with value 1 at 1.

For any graded vector bundle $\E$, $\Gamma^i(\E)=\Gamma^i(\M,\E)$ denotes the space of degree $i$ sections, i.e.\ of (degree-preserving) morphisms of graded manifolds $\sigma:\M\to\E[i]$ that are sections of the shifted projection $\pi[i]:\E[i]\to\M$. 

As explained e.g. in \cite[Remark 2.3]{mehta_algebroids_2009}, consistent choices of isomorphisms $\KK[i]\otimes\KK[j]\to\KK[i+j]$ can be made.
There result $C^\infty(\M)$-linear shift maps $\Gamma^j(\E)\to\Gamma^{j-k}(\E[k]):\sigma\mapsto\sigma_k$ defined by the commutative diagrams
\[
  \vcenter{\xymatrix{
    \M \ar[r]^-{\sigma^k} \ar[dr]_\sigma & \E[k][j-k] \ar[d]^\simeq \\
    & \E[j]
  }} .
\]

There is a non-degenerate $C^\infty(\M)$-bilinear pairing
\[ \left< \cdot, \cdot \right> : \Gamma(\E) \times \Clin(\E) \to C^\infty(\M) \]
defined by $\left< \sigma, \epsilon \right> = \sigma^* \circ \phi_{|\sigma|} (\epsilon)$.
For a section $\sigma\in\Gamma(\E)$, the contraction operator $i_\sigma$ is defined as the only degree $|\sigma|-1$ derivation of $C^\infty(\E[1])$ vanishing on $C^\infty(\M)$ and satisfying $i_\sigma \epsilon = \left< \sigma_1, \epsilon \right>$ for all $\epsilon\in\Clin(\E[1])$.

A (degree $k$) vector field on a graded manifold $\M$ is by definition a (degree $k$) derivation of $C^\infty(\M)$.
A \emph{linear} vector field on a graded vector bundle $\E\to\M$ is a vector field on $\E$ which preserves the subspace $\Clin(\E)$ of linear functions.
There is a correspondence between vector fields on $\M$ and sections of the tangent bundle $T\M$ \cite[Proposition 3.6]{mehta_algebroids_2009}.
Linear vector fields of degree $k$ on $\E$ correspond to sections of $T[k]\E\to\E$ which are bundle maps
\begin{equation*}
  \vcenter{\xymatrix{
    \E \ar[r] \ar[d] & T[k]\E \ar[d] \\
    \M \ar[r] & T[k]\M
  }} .
\end{equation*}
Linear vector fields can be shifted: any linear vector field $X$ of degree $k$ on $\E$ yields a linear vector field $X_j$ of degree $k$ on $\E[j]$ defined on linear functions by $$X_j = \phi_j \circ X \circ \phi_j^{-1}.$$

A \emph{dg-vector bundle} is a graded vector bundle $\pi:\E\to\M$ with a linear homological vector field $Q_\E$ on $\E$. 
This induces a homological vector field $Q_\M$ on $\M$ such that $Q_\E$ and $Q_\M$ are $\pi$-related. 
This also induces a degree 1 operator $Q_{\Gamma(\E)}$ on  $\Gamma(\E)$ which is $C^\infty(\M)$-linear and defined by
\begin{equation}
\label{eq:degree-one-operator-on-sections}
  \left<Q_{\Gamma(\E)}\sigma, \epsilon\right> = Q_\M \left<\sigma, \epsilon\right> - (-1)^{|\sigma|} \left<\sigma, Q_\E\epsilon \right>
\end{equation}
for all $\sigma\in\Gamma(\E)$ and $\epsilon\in \Clin(\E)$. 
Using shift maps, the last equation can also be read as
\begin{equation}
\label{eq:contraction-QGamma}
  i_{Q_{\Gamma(\E)}\sigma} = [(Q_\E)_1, i_\sigma] .
  \end{equation}

Since we will mostly consider that case, let us give an explicit form for the space of sections of graded vector bundles of the kind $\gm{E\oplus F}\to\gm{E}$, for two ordinary graded vector bundles $E,F$ over the same base.
\begin{lemma}
\label{lem:sections-of-graded-vector-bundles}
  Let $E,F\to M$ be two ordinary graded vector bundles. Then $\gm{E\oplus F}$ is a graded vector bundle over $\gm{E}$ and we have an isomorphism
  \begin{equation*}
  \label{eq:isomorphism-sections-of-graded-vector-bundle}
    \Gamma(\gm{E},\gm{E\oplus F}) \cong \Gamma(\hat S(E^\lor) \otimes F)
  \end{equation*}
  of graded $C^\infty(\gm{E})$-modules, where $\Gamma(\hat S(E^\lor) \otimes F)$ is graded by the total degree (i.e., the sum of the total degree in $\hat S(E^\lor)$ and the degree in $F$).
\end{lemma}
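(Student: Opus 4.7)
The plan is to argue locally, using the definition of $\gm{\cdot}$ in \eqref{eq:def-graded-manifold-from-graded-vector-bundle}, and then invoke the universal property of the completed symmetric algebra to identify sections with algebraic data.

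First, I would establish that $\gm{E\oplus F}$ is a graded vector bundle over $\gm{E}$. The projection $E\oplus F\to E$ of ordinary graded vector bundles induces a morphism $\gm{E\oplus F}\to \gm{E}$ of graded manifolds. Over any open $U\subset M$ trivializing both $E$ and $F$ with respective fiber types $V$ and $W$, the canonical isomorphism of graded algebras $\hat{S}(V^\lor\oplus W^\lor)\cong \hat{S}(V^\lor)\otimes \hat{S}(W^\lor)$ yields a local trivialization of $\gm{E\oplus F}$ over $\gm{E}$ with typical fiber the graded vector space $W$. The transition maps on overlaps are linear in the $W$-coordinates because the transitions of $E\oplus F$ respect the direct sum decomposition.

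Next, for the section identification, I would use the equivalence between degree-$j$ sections $\sigma:\gm{E}\to \gm{E\oplus F}[j]$ of the shifted projection and graded-algebra-sheaf homomorphisms $\sigma^*$ that restrict to the identity on the subsheaf $C^\infty(\gm{E})\subset C^\infty(\gm{E\oplus F}[j])$. Locally, $C^\infty(\gm{E\oplus F}[j])$ is a completed symmetric algebra freely generated over $C^\infty(\gm{E})$ by the shifted fiber coordinates, so $\sigma^*$ is determined by its restriction to these generators. After absorbing the shift via $(W[j])^\lor\cong W^\lor[-j]$, this restriction is a $C^\infty(M)$-linear map $F^\lor\to \Gamma(\hat{S}(E^\lor))$ of internal degree $j$, which by duality is exactly a degree-$j$ element of $\Gamma(\hat{S}(E^\lor)\otimes F)$. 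The correspondence is patently local-to-global and functorial in $U$, hence it globalizes to the claimed isomorphism.

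Finally, I would verify compatibility with the $C^\infty(\gm{E})$-module structures: addition of sections and multiplication by functions on the geometric side translate into the obvious operations on $\Gamma(\hat{S}(E^\lor)\otimes F)$ coming from its $\Gamma(\hat{S}(E^\lor))$-module structure. I expect the only substantial obstacle to be bookkeeping: accurately tracking the shift conventions and the interplay between duals and shift so that degree-$j$ sections correspond to total-degree-$j$ elements on both sides. Once those conventions are pinned down, the isomorphism is forced at each step by the universal property of $\hat{S}$.
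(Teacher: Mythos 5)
Your proposal is correct and follows essentially the same route as the paper: both identify a degree-$j$ section with the graded-algebra morphism $\sigma^*$ it induces, observe that $\sigma^*$ is determined by its values on the fiber-linear generators coming from $F^\lor$ (after absorbing the shift), and read off a $C^\infty(M)$-linear map $\Gamma(F^\lor)\to\Gamma(\hat S(E^\lor))$, i.e.\ an element of $\Gamma(\hat S(E^\lor)\otimes F)$. The only difference is that you spell out the local-trivialization argument for the graded vector bundle structure, which the paper takes for granted.
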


\begin{proof}
  The degree shift functor $[k]$ applied to the graded vector bundle $\gm{E\oplus F}\to\gm{E}$ yields simply $\gm{E\oplus F[k]}\to\gm{E}$.
  The space of functions on $\gm{E\oplus F[k]}$ is 
  $\Gamma(\hat S((E\oplus F[k])^\lor) \cong \Gamma(\hat S(E^\lor) \hat \otimes \hat S(F^\lor[-k]))$.
  
  To each section $\overline\sigma=\sum_i p_i\otimes f_i$ of total degree $k$ of $\hat S(E^\lor) \otimes F$ corresponds a section $\sigma=(\id,\sigma^*)$ of degree $k$ of the graded vector bundle $\gm{E\oplus F}\to\gm{E}$. 
  The latter section is the graded algebra morphism from $C^\infty(\gm{E\oplus F[k]})$ to $C^\infty(\gm{E})$ that is uniquely determined by the requirements $\sigma^*(1\otimes 1)=1$, $\sigma^*(\epsilon\otimes 1)=\epsilon$ and $\sigma^*(1\otimes \phi)= \sum_i \tilde\phi(f_i) p_i$, for all $\epsilon\in\Gamma(E^\lor)$ and $\phi\in\Gamma(F^\lor[-k])$. Here, $\tilde\phi$ denotes the element in $\Gamma(F^\lor)$ corresponding to $\phi$.
  
  Conversely, any degree $k$ section $\sigma=(\id,\sigma^*)$ of $\gm{E\oplus F}$, when restricted to \emph{linear} functions of the form $1\otimes \phi\in\Gamma(\hat S(E^\lor) \hat \otimes \hat S(F^\lor[-k]))$ with $\phi\in\Gamma(F^\lor[-k])$, yields a degree zero $C^\infty(M)$-linear map $\Gamma(F^\lor[-k])\to\Gamma(\hat S(E^\lor))$, hence a degree $k$ section $\overline\sigma$ of $\hat S(E^\lor)\otimes F^\lor$.
  
  It is easily checked that these assignments are inverse of each other and satisfy all the required properties.
\end{proof}

A \emph{graded Lie algebroid} is a graded vector bundle $\A\to\M$ with a degree zero bundle map $\rho:\A\to T\M$ (the anchor) and a degree zero Lie bracket $[\cdot,\cdot]$ on $\Gamma(\A)$ satisfying the Leibniz rule
\[ [X,fY] = \rho(X)(f) Y + (-1)^{|X| |f|} f [X,Y] \]
for all $X,Y\in\Gamma(\A)$ and $f\in C^\infty(\M)$.

As in the non-graded case, a graded Lie algebroid structure on a graded vector bundle $\A\to\M$ induces \cite[Theorem 4.6]{mehta_algebroids_2009} a (quadratic in the fiber coordinates) homological vector field $d_\A$ on $\A[1]$.
Locally, this vector field takes the form
\[ d_\A = \lambda^\alpha \rho^i_\alpha \frac{\partial}{\partial x^i} - (-1)^{|\lambda_\alpha|(|\lambda_\beta|+1)} \frac12 \lambda^\alpha \lambda^\beta C_{\alpha\beta}^\gamma \frac{\partial}{\partial \lambda^\gamma} . \]
Here, $x^i$ is a set of local coordinates on $\M$, $X_\alpha$ is a frame of local sections of $\A$ with degree $|X_\alpha|$, $\lambda^\alpha$ are the degree-shifted fiber coordinates dual to $X_\alpha$ (with $|\lambda^\alpha|=-|X_\alpha|+1$), $\rho^i_\alpha$ are the local functions defined by $\rho(X_\alpha)=\rho^i_\alpha \frac{\partial}{\partial x^i}$, and $C_{\alpha\beta}^\gamma$ are the structure functions defined by $[X_\alpha,X_\beta] = C_{\alpha\beta}^\gamma X_\gamma$.

A \emph{dg-Lie algebroid} is a dg-vector bundle $\A\to\M$ which is also a graded Lie algebroid such that the linear homological vector field $Q_\A$ and the Lie algebroid differential $d_\A$ satisfy $[(Q_\A)_1,d_\A]=0$.

\begin{example}
  \begin{enumerate}
    \item Any Lie algebroid $A$ trivially defines a dg-Lie algebroid $\gm A$ with $Q_{\gm A}=0$ .

    \item Given a graded Lie algebroid $\A$ over $\M$, $T[1]\A$ is a dg-Lie algebroid over $T[1]\M$ (see \cite[Section 5.2]{mehta_algebroids_2009}).
    
    \item Given a dg-manifold $\M$ with homological vector field $Q$, $T\M$ is dg-Lie algebroid over $\M$ such that the associated degree 1 operator $Q_{\Gamma(T\M)}$ is $L_Q$, the Lie derivative with respect to $Q$.
  \end{enumerate}
\end{example}

\begin{lemma}[{\cite[Lemma 1.7]{mehta_atiyah_2015}}]
\label{lem:integrable-distribution-dg-Lie-algebroid}
  Let $\M$ be a dg-manifold with homological vector field $Q$ and let $\D$ be an integrable distribution in $T\M$. If $\D$ is preserved by the Lie derivative $L_Q$, then $\D$ is a dg-Lie algebroid.
\end{lemma}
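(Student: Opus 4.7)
The plan is to construct the required dg-Lie algebroid structure on $\D$ by restriction from $T\M$, which is itself a dg-Lie algebroid by the third item of the examples above. First, since $\D$ is an integrable graded subbundle of $T\M$, it inherits a graded Lie algebroid structure over $\M$ with anchor given by the inclusion $\D\hookrightarrow T\M$ and bracket given by the restriction to $\Gamma(\D)\subset\Gamma(T\M)$ of the graded Lie bracket of vector fields. This gives the graded Lie algebroid side of the structure for free.

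Next, I would produce the linear homological vector field $Q_\D$ on $\D$ by restricting $Q_{T\M}$. We already know that the operator $Q_{\Gamma(T\M)}$ associated to $Q_{T\M}$ is $L_Q$. Applying~\eqref{eq:degree-one-operator-on-sections} with $\E=T\M$, $\sigma\in\Gamma(\D)$ and $\epsilon\in\Clin(T\M)$ any linear function vanishing on $\D$, the hypothesis $L_Q\sigma\in\Gamma(\D)$ forces both $\langle\sigma,\epsilon\rangle$ and $\langle L_Q\sigma,\epsilon\rangle$ to vanish, so that $\langle\sigma,Q_{T\M}\epsilon\rangle=0$. Since $Q_{T\M}$ is linear, $Q_{T\M}\epsilon$ is again a linear function on $T\M$, and the above shows that it also vanishes on $\D$. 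Hence $Q_{T\M}$ is tangent to the subbundle $\D\subset T\M$ and restricts to a linear vector field $Q_\D$ on $\D$, whose associated operator on sections is manifestly $L_Q|_{\Gamma(\D)}$. The identity $Q_\D^2=0$ is then inherited immediately from $Q_{T\M}^2=0$.

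It remains to verify the compatibility $[(Q_\D)_1,d_\D]=0$. Using~\eqref{eq:contraction-QGamma} together with the local coordinate expression for $d_\D$ recalled in the definition of a graded Lie algebroid, this bracket vanishes if and only if $Q_{\Gamma(\D)}$ is a graded derivation of the Lie bracket on $\Gamma(\D)$. Since that bracket is inherited from $T\M$ and $Q_{\Gamma(\D)}=L_Q|_{\Gamma(\D)}$, the required identity
\[
  L_Q[X,Y]=[L_QX,Y]+(-1)^{|X|}[X,L_QY],\qquad X,Y\in\Gamma(\D),
\]
is just the graded Jacobi identity for the Lie bracket of vector fields applied to $(Q,X,Y)$ with $|Q|=1$, which holds on all of $\Gamma(T\M)$ and restricts to $\Gamma(\D)$.

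I expect the main obstacle to be the middle step, namely checking that $Q_{T\M}$ is actually tangent to the subbundle $\D$: this is the only place where both the integrability of $\D$ (used to even view $\D$ as a graded submanifold whose defining ideal is generated by its annihilator in $\Clin(T\M)$) and the $L_Q$-stability hypothesis really come into play. Once this tangency is established, the homological identity and the $[(Q_\D)_1,d_\D]=0$ compatibility cascade from standard properties of the tangent lift and the Lie bracket of vector fields.
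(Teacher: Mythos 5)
The paper does not prove this lemma itself: it is quoted from Mehta--Stiénon--Xu, so there is no in-text argument to compare against. Your proof is correct and fills the gap along the natural lines. The one point worth tightening is the final compatibility check: unwinding $[(Q_\D)_1,d_\D]=0$ against the local coordinate form of $d_\D$ gives \emph{two} conditions, namely that $Q_{\Gamma(\D)}$ is a graded derivation of the bracket \emph{and} that the anchor intertwines $Q_{\Gamma(\D)}$ with $L_{Q}$ on $\Gamma(T\M)$ (the term involving $\rho^i_\alpha$). You only address the first, but the second is trivially satisfied here because the anchor is the inclusion and $Q_{\Gamma(\D)}=L_Q|_{\Gamma(\D)}=[Q,\cdot]$; it would be worth saying so explicitly. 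A second small correction to your closing commentary: integrability of $\D$ is not what lets you view $\D$ as a graded submanifold of $T\M$ cut out by its annihilator --- any graded subbundle has that property --- rather, integrability is used exactly where you first invoke it, to close $\Gamma(\D)$ under the bracket of vector fields and obtain the graded Lie algebroid structure. The tangency argument itself, reading \eqref{eq:degree-one-operator-on-sections} backwards to show $Q_{T\M}$ maps the annihilator of $\D$ in $\Clin(T\M)$ into itself and hence preserves the defining ideal, is sound and is the right way to extract $Q_\D$ from the hypothesis that $L_Q$ preserves $\Gamma(\D)$.
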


For a graded Lie algebroid $\L\to \M$ and a graded vector bundle $\E\to \M$, an \emph{$\L$-connection on $\E$} is a graded $\KK$-linear map
\[ \nabla : \Gamma(\L) \otimes_\KK \Gamma(\E) \to \Gamma(\E) : l\otimes e \mapsto \nabla_le \]
such that
\[
  \nabla_{fl}e = f\nabla_le , \qquad
  \nabla_lfe = (-1)^{|l||f|} f\nabla_le + \rho(l)(f)e,
\]
for all $l\in\Gamma(\L)$, $e\in\Gamma(\E)$, and $f\in C^\infty(\M)$.


\subsection{Atiyah classes}
\label{ssec:atiyah-classes}

In this section, we introduce the two notions of Atiyah class that we intend to relate in Section~\ref{sec:quasi-isomorphism} in the case of matched pairs of Lie algebroids. The first is the Atiyah class of an $A$-module relative to a Lie pair \cite{chen_atiyah_2012} and the second is the Atiyah class of a dg-vector bundle relative to a dg-Lie algebroid \cite{mehta_atiyah_2015}.

\subsubsection{Lie pairs}
\label{sssec:atiyah-class-lie-pairs}

Let $(L,A)$ be a Lie pair, and $E$ an $A$-module.
Let $\nabla$ be any $L$-connection on $E$ extending the $A$-action, i.e. such that $\nabla_ae$ coincides with the $A$-action $a\cdot e$ for all $a\in\Gamma(A)$ and $e\in\Gamma(E)$.
Let $R^\nabla:L\wedge L\to\End E$ be the curvature of $\nabla$, and define a map $\At^\nabla:A\to \Hom(L/A \otimes E, E)$ by
\begin{equation}
\label{eq:def-atiyah-cocycle-lie-pair}
  \At^\nabla(a)(\ol{l},e) = R^\nabla(a,l)e
\end{equation}
for all $a\in\Gamma(A)$, $l\in\Gamma(L)$, and $e\in\Gamma(E)$.

\begin{proposition}[{\cite{chen_atiyah_2012}}]
\label{prop:properties-atiyah-cocycle-Lie-pairs}
  \begin{enumerate}
    \item The map $\At^\nabla$ is well-defined and is a 1-cocycle for the $A$-cohomology with values in the $A$-module $\Hom(L/A \otimes E, E)$.
    \item The cohomology class $\alpha_{(L,A),E}$ of $\At^\nabla$ in $H^1(A,\Hom(L/A \otimes E, E))$ is independent of the choice of the connection $\nabla$ extending the $A$-action.
      It is called \emph{the Atiyah class of the $A$-module $E$ relative to the Lie pair $(L,A)$}.
  \end{enumerate}
\end{proposition}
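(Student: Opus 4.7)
My plan is to verify the two assertions in sequence: tensoriality and descent of $\At^\nabla$, then the cocycle identity, then independence of the chosen connection. First, a standard calculation using the Leibniz rule $[l,fl'] = f[l,l'] + \rho(l)(f)l'$ shows that $R^\nabla(a,l)e$ is $C^\infty(M)$-linear separately in each of the three arguments $a$, $l$, $e$, so $\At^\nabla$ is a well-defined bundle map of the correct type once we know that its middle argument descends to $L/A$. For descent it suffices to show that $R^\nabla(a,l)e = 0$ whenever $l\in\Gamma(A)$; but this equals $[\nabla_a,\nabla_l]e - \nabla_{[a,l]}e$, which is the curvature of the flat $A$-action on $E$ and therefore vanishes. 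Hence $\At^\nabla$ defines a 1-cochain in $\Gamma(A^\lor \otimes \Hom(L/A\otimes E,E))$.

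Next I would verify that $d_A \At^\nabla = 0$. Expanding $(d_A\At^\nabla)(a_0,a_1)(\ol l,e)$ using the definition of the differential on a 1-cochain with values in the $A$-module $\Hom(L/A\otimes E, E)$, where $A$ acts on $L/A$ via $\nabla^A$ and on $E$ via the given module structure, produces seven terms: two coming from $\nabla_{a_0}(R^\nabla(a_1,l)e)$, two from $-\nabla_{a_1}(R^\nabla(a_0,l)e)$, two curvature-on-bracket terms of the form $\pm R^\nabla(a_i,\overline{[a_j,l]})e$, and the term $-R^\nabla([a_0,a_1],l)e$. These are exactly the surviving terms of the Lie algebroid Bianchi identity
\[ \sum_{\text{cyc}}\bigl([\nabla_{l_0},R^\nabla(l_1,l_2)] - R^\nabla([l_0,l_1],l_2)\bigr) = 0 \]
specialised to $(l_0,l_1,l_2) = (a_0,a_1,l)$ and applied to $e$, the crucial cancellation again being $R^\nabla(a_0,a_1) = 0$ from the flatness of the $A$-action on $E$. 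The main technical obstacle is bookkeeping these terms with the correct signs and matching them against the three cyclic copies of Bianchi; I expect no further subtleties beyond that.

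Finally, for part (2), any two $L$-connections $\nabla, \nabla'$ on $E$ extending the $A$-action differ by a tensor $\phi = \nabla' - \nabla \in \Gamma(L^\lor \otimes \End E)$ that vanishes identically on $\Gamma(A)$; hence $\phi$ descends to a section $\ol\phi \in \Gamma(\Hom(L/A\otimes E, E))$, i.e.\ a 0-cochain. A direct expansion using $\phi|_A = 0$ yields
\[ R^{\nabla'}(a,l)e - R^\nabla(a,l)e = \nabla_a\bigl(\phi(l)e\bigr) - \phi(l)\nabla_a e - \phi([a,l])e, \]
and the right-hand side is precisely $(d_A\ol\phi)(a)(\ol l, e)$. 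Therefore $\At^{\nabla'} - \At^\nabla = d_A\ol\phi$ is a coboundary, and the cohomology class $\alpha_{(L,A),E} \in H^1(A,\Hom(L/A\otimes E, E))$ depends only on the Lie pair and on the $A$-module $E$, as claimed.
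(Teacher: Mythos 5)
Your proof is correct. The paper itself does not prove this proposition --- it is quoted from Chen--Sti\'enon--Xu \cite{chen_atiyah_2012} --- and your argument (descent via $R^\nabla|_{A\wedge A}=0$ from flatness of the $A$-action, the cocycle identity from the differential Bianchi identity $\sum_{\mathrm{cyc}}\bigl([\nabla_{l_0},R^\nabla(l_1,l_2)]-R^\nabla([l_0,l_1],l_2)\bigr)=0$ specialised to $(a_0,a_1,l)$, and the coboundary $d_A\ol\phi$ for the difference tensor $\phi=\nabla'-\nabla$ vanishing on $A$) is precisely the standard proof given in that reference.
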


\begin{definition}
  The \emph{Atiyah class $\alpha_{(L,A)}$ of a Lie pair} $(L,A)$ is $\alpha_{(L,A),L/A}$.
\end{definition}

\subsubsection{dg-Lie algebroids}
\label{sssec:Atiyah-class-dg-Lie-algebroids}

Let $(\A,Q_\A)$ be a dg-Lie algebroid and $(\E,Q_\E)$ a dg-vector bundle over the same base.
Let $\nabla$ be an $\A$-connection on $\E$.
Define a map $\At^\nabla:\A\otimes\E\to\E$ by
\begin{equation}
\label{eq:definition-Atiyah-cocycle-dgLiealgebroid}
  \At^\nabla(a,e) = Q_{\Gamma(\E)}(\nabla_ae) - \nabla_{Q_{\Gamma(\A)}(a)}e - (-1)^{|a|}\nabla_aQ_{\Gamma(\E)}(e)
\end{equation}
for all $a\in\Gamma(\A)$ and $e\in\Gamma(\E)$.

\begin{proposition}[{\cite{mehta_atiyah_2015}}]
\label{prop:properties-atiyah-cocycle-dg}
  \begin{enumerate}
    \item $\At^\nabla$ can be regarded as a degree $1$ section of $\Hom(\A \otimes \E, \E)$.
    \item $\At^\nabla$ is a cocycle: $Q(\At^\nabla) = 0$ for $Q=Q_{\Gamma(\Hom(\A \otimes \E, \E))}$.
    \item The cohomology class $\alpha_{\A,\E}$ of $\At^\nabla$ in $H^1(\Gamma(\Hom(\A \otimes \E, \E)),Q)$ is independent of the choice of the connection $\nabla$.
      It is called \emph{the Atiyah class of the dg-vector bundle $\E$ relative to the dg-Lie algebroid $\A$}.
  \end{enumerate}
\end{proposition}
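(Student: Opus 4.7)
The plan is to prove the three assertions in order, treating $\At^\nabla$ as the \emph{tensorial failure} of the connection $\nabla$ to commute with the homological vector fields on $\A$ and $\E$. All computations rely on three ingredients: the graded Leibniz rule for $Q_{\Gamma(\A)}$ and $Q_{\Gamma(\E)}$ (which follows from \eqref{eq:degree-one-operator-on-sections}); the dg-compatibility of the anchor $\rho\circ Q_{\Gamma(\A)} = [Q_\M,\rho]$ (with appropriate signs), which is equivalent to the fact that $\rho:\A \to T\M$ is a morphism of dg-vector bundles; and the nilpotency $Q_\M^2 = Q_{\Gamma(\A)}^2 = Q_{\Gamma(\E)}^2 = 0$.

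For (1), I would substitute $a \mapsto fa$ and then $e \mapsto fe$ in \eqref{eq:definition-Atiyah-cocycle-dgLiealgebroid} and track each term. In the first slot, the Leibniz rules for $Q_{\Gamma(\E)}$ and $Q_{\Gamma(\A)}$ each produce a $Q_\M(f)\nabla_ae$ contribution; these cancel, leaving $(-1)^{|f|} f \At^\nabla(a,e)$, the correct behavior for a $C^\infty(\M)$-linear (in the graded sense) section of degree $1$. In the second slot, the Leibniz rule applied to $\nabla_a(fe)$ and $\nabla_{Q_{\Gamma(\A)} a}(fe)$ generates additional $\rho$-derivative terms of the shape $Q_\M(\rho(a)(f))\,e - \rho(Q_{\Gamma(\A)}a)(f)\,e - (-1)^{|a|}\rho(a)(Q_\M f)\,e$, which vanishes precisely by the anchor compatibility. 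That $\At^\nabla$ has total degree $1$ is clear from the formula.

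For (2), specializing \eqref{eq:degree-one-operator-on-sections} to $\Hom(\A\otimes\E,\E)$ gives, for a degree-$1$ section $T$,
\[ (QT)(a,e) = Q_{\Gamma(\E)}(T(a,e)) + T(Q_{\Gamma(\A)} a, e) + (-1)^{|a|} T(a, Q_{\Gamma(\E)} e). \]
Substituting $T = \At^\nabla$ yields nine terms; six of them cancel in pairs via $Q_{\Gamma(\E)}^2 = 0$ and $Q_{\Gamma(\A)}^2 = 0$, and the remaining three cancel thanks to the anchor compatibility encoded in $[(Q_\A)_1, d_\A] = 0$. Morally, one is computing $Q^2\nabla$ applied formally to $(a,e)$ and extracting the tensorial part, which must vanish.

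For (3), given a second $\A$-connection $\nabla'$, the difference $\omega := \nabla' - \nabla$ is $C^\infty(\M)$-bilinear of degree zero and hence defines a section in $\Gamma^0(\Hom(\A\otimes\E,\E))$. Direct substitution in \eqref{eq:definition-Atiyah-cocycle-dgLiealgebroid} gives
\[ \At^{\nabla'}(a,e) - \At^\nabla(a,e) = Q_{\Gamma(\E)}(\omega(a,e)) - \omega(Q_{\Gamma(\A)} a,e) - (-1)^{|a|}\omega(a, Q_{\Gamma(\E)} e) = (Q\omega)(a,e), \]
so $[\At^{\nabla'}] = [\At^\nabla]$ in $H^1(\Gamma(\Hom(\A\otimes\E,\E)),Q)$. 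The hardest part of the argument is the consistent bookkeeping of Koszul signs — especially in step (2), where the sign $(-1)^{|T|} = -1$ arising in the induced differential on $\Hom(\A\otimes\E,\E)$ must be reconciled with the $(-1)^{|a|}$ in the definition of $\At^\nabla$, and one must confirm that the three non-trivially-cancelling terms fit together using \emph{only} the dg-compatibility of the anchor, without additional hypotheses on $\nabla$.
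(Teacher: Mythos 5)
The paper offers no proof of this proposition: it is imported verbatim from Mehta--Stiénon--Xu \cite{mehta_atiyah_2015}, so there is no in-paper argument to compare against. Your proof is the standard one and is correct in all three parts: tensoriality in the second slot does hinge exactly on the anchor being a map of dg-vector bundles, and the coboundary computation in (3) is right. One small correction to your bookkeeping in (2): of the nine terms, \emph{three} vanish individually by $Q_{\Gamma(\E)}^2=0$, $Q_{\Gamma(\A)}^2=0$ (namely $Q_{\Gamma(\E)}^2\nabla_ae$, $\nabla_{Q_{\Gamma(\A)}^2a}e$, $\nabla_aQ_{\Gamma(\E)}^2e$), and the remaining \emph{six} cancel in three pairs by pure sign cancellation --- no further use of $[(Q_\A)_1,d_\A]=0$ is needed at that stage, since the anchor compatibility has already been consumed in establishing tensoriality in (1).
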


\begin{definition}
  The \emph{Atiyah class $\alpha_\A$ of a dg-Lie algebroid} $\A$ is $\alpha_{\A,\A}$.
\end{definition}


\section{Relation to dDG-algebras}
\label{sec:dDG-algebras}

In this section, we show how the notions of Atiyah class for Lie pairs and for dg-Lie-algebroids (see Section \ref{ssec:atiyah-classes}) relate to the Atiyah class of dDG-algebras of \cite[Section 8]{calaque_hochschild_2010}. 
For completeness, we recall from \emph{op.\ cit.} the definitions of dDG-algebras, dDG-modules and the corresponding ``curvatures'', or Atiyah cocycles.

A (bigraded) DG-algebra on a manifold $M$ is a bigraded sheaf of algebras $\a$ on $M$ equipped with a derivation $\bar{d}_\a$ of bidegree $(1, 0)$ such that $\bar{d}_\a^2=0$. 
A dDG-algebra on $M$ is a bigraded sheaf of DG-algebras $\a$ on $M$ as above, equipped with a derivation $d_\a$ of bidegree $(0, 1)$ such that $\bar{d}_\a d_\a+ d_\a \bar{d}_\a= 0$.

If $\a$ is a DG-algebra, then a  DG-$\a$-module is a bigraded sheaf of
$\a$-modules $\m$ equipped with an additive map $\bar{d}_\m \colon \m\rightarrow \m$ of bidegree $(1, 0)$ such
that 
$\bar{d}_\m^2= 0$ and $\bar{d}_\m(\alpha m) = \bar{d}_\a(\alpha)m+(-1)^{|\alpha|}\alpha\bar{d}_\m(m)$
for all homogeneous sections $\alpha$ of $\a$ and $m$ of $\m$.

Finally, for a dDG-algebra $\A$, a connection on a DG-$\a$-module $\m$ is an additive map $d_\m\colon \m\rightarrow\m$ of bidegree $(0, 1)$ such that $d_\m(\alpha m) = d_\a(\alpha)m+(-1)^{|\alpha|}\alpha d_\m(m)$.
The corresponding curvature on the module $\m$ is defined as $R_\m=-(d_\m\bar{d}_\m + \bar{d}_\m d_\m)$.

The shift functors of bidegree $(1,0)$ and $(0,1)$ are denoted by ${}_-[1]$ and ${}_-(1)$, respectively.


\subsection{Lie pairs}
\label{ssec:Atiyah-class-dDG-algebra-of-a-Lie-pair}

Let $(L,A)$ be a Lie pair and let $j:L/A\to L$ be a splitting of the short exact sequence of vector bundles $$0\to A\to L\to L/A\to 0.$$ Denote by $B$ the image of $j$.
The Chevalley--Eilenberg algebra $\a = C(L) \cong C(A) \otimes C(B)$ is bigraded and, since $A$ is a subalgebroid, the degree 1 differential $d_L$ decomposes as a sum $$d_L=d^{1,0}_\a+d^{0,1}_\a+d^{-1,2}_\a$$ of three derivations of bidegree $(1,0)$, $(0,1)$, and $(-1,2)$, respectively.
Moreover, we have $(d^{1,0}_\a)^2=0$ and $d^{1,0}_\a d^{0,1}_\a+d^{0,1}_\a d^{1,0}_\a=0$.
Hence, $(C(L), d^{1,0}_\a, d^{0,1}_\a)$ is a dDG-algebra.

Let $(E,\nabla^A)$ be an $A$-module.
Consider $\m = \a \otimes \Gamma(E)$ with its left $\a$-action.
The $A$-module structure on $C(B)\otimes E$ gives a differential $d^{1,0}_\m$ (of bidegree $(1,0)$) on $\m \cong C(A) \otimes C(B) \otimes \Gamma(E)$ which satisfies
\[ d^{1,0}_\m(am) = d^{1,0}_\a(a)m + (-1)^{|a|} a d^{1,0}_\m(m)  \]
for all $a\in\a$ and $m\in\m$.
So $\m$ is a DG-module over $\a$.

Let $\nabla$ be an $L$-connection on $E$ extending the $A$-action.
It extends as a degree 1 endomorphism $d_\nabla$ of $\m$ which splits as a sum $d_\nabla = d^{1,0}_\m + d_\m^{0,1} + d_\m^{-1,2}$ of endomorphisms of bidegree $(1,0)$, $(0,1)$, and $(-1,2)$, respectively.
Here, as $\nabla$ extends the $A$-action, $d^{1,0}_\m$ is the differential defined in the foregoing paragraph.
Moreover, $d_\m^{0,1}$ satisfies
\[ d_\m^{0,1}(am) = d^{0,1}_\a(a)m + (-1)^{|a|} a d_\m^{0,1}(m) \]
i.e.\ it is a connection on the DG-$\a$-module $\m$.

The ``curvature'' of the connection $d_\m^{0,1}$ is defined in \cite[Lemma 8.2.4]{calaque_hochschild_2010} as the DG-$\a$-module map $R_\m:\m\to\m(1)[1]$ expressed by
$$ R_\m = -(d_\m^{0,1} d^{1,0}_\m + d^{1,0}_\m d_\m^{0,1}) . $$
It is a cocycle for the differential $[d^{1,0}_\m,{}\cdot{}]$ on $\Hom^{\bullet,\bullet}(\m,\m)$, and it is a coboundary if and only if there exists a connection with vanishing curvature (i.e. commuting with $d^{1,0}_\m$).
Moreover, its class $\alpha_\m\in H^1(\Hom^{\bullet,1}(\m,\m),[d^{1,0}_\m,{}\cdot{}])$ is independent of the chosen connection $d^{0,1}_\m$ (in our context, such connections always exist since we work over rings of smooth functions).

Since $E$ is an $A$-module, $\End(E)$ is also an $A$-module.
Consider the DG-$\a$-module isomorphism
\[ \mu: \a \otimes \Gamma(\End(E)) \to \Hom^{\bullet,\bullet}(\m,\m) \]
defined by $\mu(x)|_{\Gamma(E)} = x$ for all $x\in \a \otimes \Gamma(\End(E))$.
The Atiyah cocycle $\At^\nabla$ of $E$ is an element in $\a^{1,1}\otimes\Gamma(\End(E))$.
We now show that its image by $\mu$ is $R_\m$.

\begin{proposition}
The curvature of $\m$ is the only DG-$\a$-module map from $\m$ to $\m(1)[1]$ satisfying
$$i_a i_b R_\m(e)=\mathrm{At}^\nabla(a)(b,e)$$
for $a\in\Gamma(A), b\in\Gamma(B),e\in\Gamma(E)$.
Consequently, $\mu$ sends $\alpha_{(L,A),E}$ to $\alpha_\m$ in cohomology.
\end{proposition}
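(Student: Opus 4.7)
The plan is to prove the stronger equality $R_\m = \mu(\At^\nabla)$ as $(1,1)$-bidegree DG-$\a$-module maps $\m \to \m(1)[1]$, which contains both the uniqueness claim and the cohomological statement at once.

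First I would establish uniqueness. Any $\a$-linear map $\phi: \m \to \m(1)[1]$ of bidegree $(1,1)$ is determined by its restriction to $\Gamma(E) \subset \m^{0,0}$, because $\m = \a\cdot\Gamma(E)$ as a graded $\a$-module. That restriction lands in $\m^{1,1} \cong \Gamma(A^\lor \otimes B^\lor \otimes E)$, and a section of this bundle is completely determined by all its pairings $i_a i_b \phi(e)$ with $a \in \Gamma(A)$, $b \in \Gamma(B)$, $e \in \Gamma(E)$. The more restrictive class of DG-$\a$-module maps inherits the same rigidity.

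Next I would compute $i_a i_b R_\m(e)$ for $e \in \Gamma(E)$ using the identity $d_\nabla^2 = R^\nabla\cdot$ on $\m$, where $d_\nabla = d^{1,0}_\m + d^{0,1}_\m + d^{-1,2}_\m$ and $R^\nabla \in \Gamma(\Lambda^2 L^\lor \otimes \End E)$ is the curvature of $\nabla$. Decomposing $d_\nabla^2$ by bidegree, its $(1,1)$-component on $\Gamma(E)$ is exactly the anticommutator $\{d^{1,0}_\m, d^{0,1}_\m\}$, which matches the $A^\lor \wedge B^\lor$-part of $R^\nabla\cdot e$. Contracting with $a$ and $b$ and absorbing the overall minus sign in $R_\m = -(d^{0,1}_\m d^{1,0}_\m + d^{1,0}_\m d^{0,1}_\m)$ yields $i_a i_b R_\m(e) = R^\nabla(a,b)e = \At^\nabla(a)(b,e)$. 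Since $\mu(\At^\nabla)$ also satisfies the same pairing formula by the defining property $\mu(x)|_{\Gamma(E)} = x$, uniqueness forces $R_\m = \mu(\At^\nabla)$.

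The cohomological statement is then automatic: $\mu$ is a DG-$\a$-module isomorphism and therefore induces an isomorphism on cohomology for the differential $[d^{1,0}_\m, -]$; invoking Proposition~\ref{prop:properties-atiyah-cocycle-Lie-pairs}(1) to guarantee that $\At^\nabla$ is a cocycle, it sends $[\At^\nabla] = \alpha_{(L,A),E}$ to $[R_\m] = \alpha_\m$. The main technical obstacle I anticipate is purely the sign bookkeeping: tracking Koszul signs in the bidegree decomposition of $d_\nabla^2$, reconciling the contraction conventions ($i_a i_b$ versus $i_b i_a$ on $2$-forms) and the overall minus sign in the definition of $R_\m$. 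A careful local computation in a frame of $L = A \oplus B$ should settle this.
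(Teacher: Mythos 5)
Your proposal is correct and follows essentially the same route as the paper: identify $\At^\nabla(a)(b,e)=R^\nabla(a,b)e$ with a contraction of $d_\nabla^2 e$, decompose $d_\nabla^2$ by bidegree so that the $(1,1)$-component is $\{d^{1,0}_\m,d^{0,1}_\m\}=-R_\m$ on $\Gamma(E)$, and conclude via $\a$-linearity and the fact that $\mu$ is a DG-$\a$-module isomorphism. The only difference is that you spell out the uniqueness/rigidity step and the sign bookkeeping (the $i_ai_b$ versus $i_bi_a$ swap absorbing the minus sign in $R_\m$), which the paper leaves implicit.
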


\begin{proof}
Let $a\in\Gamma(A), b\in\Gamma(B), e\in\Gamma(E)$. Recall that the Atiyah cocycle is 
\[\At^\nabla(a)(b,e)=R^\nabla(a,b)e=i_bi_ad_\nabla^2e.\] 
Using $d_\nabla=d^{1,0}_\m+d^{0,1}_\m+d^{-1,2}_\m$ and  $(d^{1,0}_\m)^2=0$, we get
\[i_bi_ad^2_\nabla e=i_ai_bR_\m (e).\]
\end{proof}


\subsection{dg-Lie algebroids}
\label{ssec:Atiyah-class-dDG-algebra-of-a-dg-Lie-algebroid}

Let $\M$ be a dg-manifold and $\A\to \M$ a dg-Lie algebroid. 

Recall that by definition $\A$ is equipped with a homological vector field $d_\A$ on $\A[1]$ and a linear homological vector field $Q_\A$ on $\A$ such that $[d_\A,(Q_{\A})_1]=0$. 

Consider the polynomial functions $\a=\Gamma(S(\A[1]^\lor))\subset C^\infty(\A[1])$ on $\A[1]$.
There is a grading on $\a$ induced by the one on functions on $\A[1]$.
This grading comes from a bigrading determined by the manifold degree of the dg-manifold $\A$ and the polynomial degree of the bundle $S(\A[1]^\lor)$. 
By construction, $(Q_\A)_1$ is of bidegree $(1,0)$ while $d_\A$ is of bidegree $(0,1)$. 
Hence $(\a,(Q_{\A})_1,  d_\A)$ is a dDG-algebra. 

Let $\E\to\M$ be a dg-vector bundle and set $\m=\a\otimes\Gamma(\E)$. 
The latter is an $\a$-module and has a bigrading inherited from the one of $\A$ and by declaring $\Gamma(\E)$ to lie in degrees $(\bullet,0)$.
Moreover, $\m$ has a differential $Q_\m$ of bidegree $(1,0)$ defined by 
$$Q_\m=(Q_\A)_1\otimes 1+1\otimes Q_{\Gamma(\E)},$$
where $Q_{\Gamma(\E)}$ is defined in (\ref{eq:degree-one-operator-on-sections}). 
Hence, $\m$ is a DG-module over $\a$. 

Let $\nabla$ be an $\A$-connection on $\E$ and denote by $d_{\A,\E}$ its extension to $\A$-forms. \
Since for all $a\in\a$ and $m\in\m$,
\[ d_{\A,\E}(am) = d_\A(a)m + (-1)^{|a|} a d_{\A,\E}(m) , \]
$d_{\A,\E}$ defines a connection on the DG-$\a$-module $\m$.

The curvature of the DG-$\a$-module $\m$ is 
$$R_\m=-[Q_\m,d_{\A,\E}].$$
Recall from \eqref{eq:definition-Atiyah-cocycle-dgLiealgebroid} the Atiyah cocycle  $\At^\nabla$ of the dg-vector bundle $\E$ relative to the dg-Lie algebroid $\A$.

As in the previous section, there is a DG-$\a$-module isomorphism
\[ \mu: \a \otimes \Gamma(\End(\E)) \to \Hom^{\bullet,\bullet}(\m,\m) \]
defined by $\mu(x)|_{\Gamma(\E)} = x$ for all $x\in \a \otimes \Gamma(\End(\E))$.
The Atiyah cocycle $\At^\nabla$ of $\E$ is an element of degree $(1,1)$ in $\a\otimes\Gamma(\End(\E))$.
We now show that its image by $\mu$ is $R_\m$.

\begin{proposition}
The curvature of $\m$ is the only DG-$\a$-module map from $\m$ to $\m(1)[1]$ satisfying
  \begin{equation}
    i_X R_\m (e) = (-1)^{|X|} \At^\nabla(X,e)
  \end{equation}
for all $X\in\Gamma(\A)$ and $e\in\Gamma(\E)$.
Consequently, $\mu$ sends $\alpha_{\A,\E}$ to $\alpha_\m$ in cohomology.
\end{proposition}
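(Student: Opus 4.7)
The plan is to mirror the proof of the analogous proposition for Lie pairs: establish uniqueness from the DG-$\a$-module nature of $R_\m$, then compute $i_X R_\m(e)$ by brute force from the definition $R_\m = -[Q_\m, d_{\A,\E}]$, and match the three resulting terms with the three summands of $\At^\nabla(X,e)$.

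For uniqueness, I would argue that any DG-$\a$-module map $\m \to \m(1)[1]$ is determined by its restriction to $\Gamma(\E)\subset \m$, since $\m = \a\cdot \Gamma(\E)$. Moreover, for $e\in\Gamma(\E)$ the image $R_\m(e)$ has bidegree $(1,1)$, i.e.\ lies in $\a^{1,1}\otimes\Gamma(\E)$; such an element is linear in the $\A[1]$-fiber coordinates, and hence is recovered from its contractions $i_X R_\m(e)$ as $X$ runs over $\Gamma(\A)$. Consequently it suffices to identify these contractions.

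For the computation, I would expand
\[
  R_\m(e) \;=\; -Q_\m\bigl(d_{\A,\E}(e)\bigr) \;-\; d_{\A,\E}\bigl(Q_\m(e)\bigr),
\]
using $Q_\m = (Q_\A)_1 \otimes 1 + 1 \otimes Q_{\Gamma(\E)}$ and the fact that, on $\Gamma(\E)$, $d_{\A,\E}$ reduces to $\nabla$, so that $i_X d_{\A,\E}(e) = \nabla_X e$ (up to a Koszul sign) for $X\in\Gamma(\A)$. The key tool to move $i_X$ past $(Q_\A)_1$ is the contraction identity \eqref{eq:contraction-QGamma}, which rewrites $i_X(Q_\A)_1$ as a graded commutator producing an $i_{Q_{\Gamma(\A)}X}$ term. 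Similarly, one can commute $i_X$ past $d_\A$ by exploiting that $i_X$ is a derivation of bidegree $(0,-1)$. After collecting, the three surviving pieces are precisely $Q_{\Gamma(\E)}(\nabla_X e)$, $-\nabla_{Q_{\Gamma(\A)}X}e$, and $-(-1)^{|X|}\nabla_X Q_{\Gamma(\E)}(e)$, which together equal $(-1)^{|X|}\At^\nabla(X,e)$ by \eqref{eq:definition-Atiyah-cocycle-dgLiealgebroid}.

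The main obstacle is sign-bookkeeping: $i_X$ has degree $|X|-1$, while $(Q_\A)_1$, $d_\A$, $Q_{\Gamma(\E)}$ and $Q_{\Gamma(\A)}$ all shift degree by $1$, and the bigrading conventions must be applied consistently. Once the contraction formula is verified, the uniqueness statement forces $R_\m = \mu(\At^\nabla)$ (up to the overall sign $(-1)^{|X|}$ absorbed into the pairing convention for $\mu$). Since $\mu$ is a DG-$\a$-module isomorphism, it carries the cocycle $\At^\nabla$ to $R_\m$ and the coboundaries (coming from different choices of $\nabla$, hence different $d_{\A,\E}$) to coboundaries, so passing to cohomology yields $\mu_*\alpha_{\A,\E} = \alpha_\m$.
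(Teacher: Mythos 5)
Your proposal follows essentially the same route as the paper: expand $i_X R_\m(e)$ from $R_\m=-[Q_\m,d_{\A,\E}]$, commute $i_X$ past $(Q_\A)_1$ using the contraction identity \eqref{eq:contraction-QGamma}, and match the three resulting terms against \eqref{eq:definition-Atiyah-cocycle-dgLiealgebroid}; the uniqueness and cohomology remarks you add are the same ones the paper leaves implicit. The only blemish is the sign-bookkeeping you yourself flag (the three terms as you list them sum to $\At^\nabla(X,e)$ rather than $(-1)^{|X|}\At^\nabla(X,e)$), but carrying the Koszul signs through as in the paper confirms the stated identity.
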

\begin{proof}
Using formula \eqref{eq:contraction-QGamma}, we get
\begin{align*}
  i_X [Q_\m,d_{\A,\E}](e) 
  &= i_X Q_\m \nabla e + i_X \nabla Q_{\Gamma(\E)} e \\
  &= (-1)^{|X|-1}(Q_\m i_X\nabla e-i_{Q_{\Gamma(\A)}X}\nabla e) + \nabla_X Q_{\Gamma(\E)} e \\
  &= (-1)^{|X|-1} Q_{\Gamma(\E)}\nabla_Xe-(-1)^{|X|-1}\nabla_{Q_{\Gamma(\A)}X} e + \nabla_X Q_{\Gamma(\E)}e \\
  &= -(-1)^{|X|}\At^\nabla(X,e).
\end{align*}
\end{proof}


\section{dg-Lie algebroids from Lie pairs}
\label{sec:DG-Lie-algebroids-from-Lie-pairs}

In this section, we prove our first main result, Theorem~\ref{thm:1st-main-theorem}.
The techniques used in this section and the next one closely follow Dolgushev's paper \cite{dolgushev_covariant_2005} and the subsequent works \cite{calaque_formality_2007}, \cite[Section 10]{calaque_lectures_2011}.


\subsection{Setup}
Let $(L,A)$ be a Lie pair, and let $\phi:L/A\to L$ be a splitting of the short exact sequence of vector bundles
\[ 0 \to A \to L \to L/A \to 0 . \]
Denote by $B$ the image of $\phi$ in $L$.

The inclusion $i:A\to L$ induces an inclusion of graded vector bundles $A[1]\to L[1]\to L[1]\oplus L/A$, where the second map is the inclusion as the first component.
Applying the functor \eqref{eq:def-graded-manifold-from-graded-vector-bundle} from graded vector bundles to graded manifolds, we get an inclusion of graded manifolds
\begin{equation*}
\label{eq:inclusion-A-LB}
  \pmap: \gm{A[1]} \to \gm{L[1]\oplus L/A}
\end{equation*}
over the identity from $M$ to $M$.

Using the splitting $\phi$, we may identify $L[1]\oplus L/A$ with $A[1]\oplus B[1] \oplus B$.
Hence, we have a projection of graded manifolds
\begin{equation*}
\label{eq:projection-ABB-A}
  \pi: \gm{A[1]\oplus B[1] \oplus B} \to \gm{A[1]}
\end{equation*}
over the identity from $M$ to $M$. 

Let us define 
\begin{align*}
  \M &= \gm{A[1]\oplus B[1] \oplus B} , & \A &= \gm{A[1]}.
\end{align*}
So we have
\begin{equation*}
  \pi: \M \rightleftarrows \A : \iota .
\end{equation*}

Let us pick a system of local coordinates 
\begin{equation}
\label{eq:coordinates}
  x^1,\dots,x^n,\alpha^{1},\dots,\alpha^{t},\beta^1,\dots,\beta^s,b^1,\dots,b^s
\end{equation}
on $\M$, where the $x^i$ are coordinates on $M$, the $\alpha^i$ (respectively, $b^i$) are linear coordinates on the fibers of $A[1]$ (respectively, $B$), and the $\beta^i$ are the $b^i$ seen as coordinates on $B[1]$ (so they have degree 1).
Let us denote by $\lambda^i$, $i=1,\dots,s+t$, the coordinates $\beta^1,\dots,\beta^s,\alpha^1,\dots,\alpha^t$ collectively.

The Euler vector field generating the homotheties of the vector bundle $B$ induces a degree 1 vector field $\delta$ on $\gm{B[1]\oplus B}$ and hence on $\M$.
It reads
\begin{equation*}
  \delta = {\id} \otimes {\beta^i} \otimes {\pb{i}}
\end{equation*}
or, more explicitly,
\begin{equation*}
  \delta(\alpha\otimes\beta\otimes b) = (-1)^{|\alpha|+|\beta|}  \alpha\otimes (\beta\wedge\beta^i)\otimes\pb{i}b,
\end{equation*}
for homogeneous elements $\alpha\in\Gamma(\Lambda(A^\lor))$, $\beta\in\Gamma(\Lambda(B^\lor))$, $b\in\Gamma(\hat S(B^\lor))$.
An easy check shows that $\delta$ is defined globally and that
\begin{equation*}
  \delta \circ \delta = 0 .
\end{equation*}

As in \cite{fedosov_simple_1994,dolgushev_covariant_2005,calaque_formality_2007}, there is a homotopy $\ka:C^\infty(\M) \to C^\infty(\M)$ vanishing on $\Gamma(\Lambda(A^\lor)\otimes \Lambda^0 B^\lor \otimes \hat S(B^\lor))$ and sending, for each $q\geq 1$ and $r\geq 0$, $\Gamma(\Lambda^\bullet A^\lor \otimes \Lambda^q B^\lor \otimes S^rB^\lor)$ to $\Gamma(\Lambda^\bullet A^\lor \otimes \Lambda^{q-1} B^\lor \otimes S^{r+1}B^\lor)$ by
\begin{equation*}
  \ka = \frac{1}{q+r} {\id} \otimes {i_{\frac{\partial}{\partial \beta^i}}} \otimes {b^i}
\end{equation*}
or, more explicitly,
\begin{equation*}
  \ka(\alpha\otimes\beta\otimes b) 
  = \frac{1}{q+r} (-1)^{|\alpha|} \alpha\otimes {i_{\frac{\partial}{\partial \beta^i}}\beta} \otimes b^ib ,
\end{equation*}
for homogeneous elements $\alpha\in\Gamma(\Lambda(A^\lor))$, $\beta\in\Gamma(\Lambda^q B^\lor)$, $b\in \Gamma(S^r B^\lor)$.

\begin{remark}
To shorten the notation, let us write for any vector bundle $F$ on $M$
\begin{align*}
  \Omega^\bullet(F) &= \Gamma(\Lambda^\bullet L^\lor \otimes F) , \\
  \Omega^{\bullet,\bullet}(F) &= \Gamma(\Lambda^\bullet A^\lor \otimes \Lambda^\bullet B^\lor \otimes F) ,
\end{align*}
and
\begin{align*}
  \B &= \hat S(B^\lor) \otimes \Hom(B\otimes B,B) , \\
  \B^{\geq i} &= \hat S^{\geq i}(B^\lor) \otimes \Hom(B\otimes B,B) , \\
  \B^i &= S^i(B^\lor) \otimes \Hom(B\otimes B,B) .
\end{align*}
So, for example, depending on the context we will use the following notation for the space of functions on $\M$,
\[ C^\infty(\M) = \Gamma(\Lambda(L^\lor)\otimes \hat S(B^\lor)) = \Omega^\bullet(\hat S(B^\lor)) , \]
this space being isomorphic to the space
\[ \Gamma(\Lambda(A^\lor) \otimes \Lambda(B^\lor) \otimes \hat S(B^\lor)) = \Omega^{\bullet,\bullet}(\hat S(B^\lor)) . \qedhere \]
\end{remark}

We now extend the definitions of $\delta$ and $\ka$ to some vector bundles over $\M$.
Specifically, consider the graded vector bundle
\begin{equation}
\label{eq:def-D}
  \D=\gm{L[1]\oplus B\oplus B} \to \M=\gm{L[1]\oplus B},
\end{equation}
whose sections are $\Gamma(\D)=\Gamma(\Lambda (L^\lor) \otimes \hat S(B^\lor) \otimes B)$.
We consider it as the kernel of the differential of the projection of graded manifolds $L[1]\oplus B\to L[1]$, hence as an integrable subbundle of $T\M$ and can thus see its sections as derivations of the algebra of functions on $\M$: elementary (local) sections $Y=\lambda \otimes b \otimes \pb{i} \in\Gamma(\D)$ act on elementary functions $f=\lambda' \otimes b' \in C^\infty(\M)$ by
$$ Y(f) = \lambda \wedge \lambda' \otimes b \pb{i} b' . $$
The operator $\delta$ 
then extends to $\Gamma(\D)$ by the Lie derivative,
\[ \delta(Y) = [\delta,Y] = \delta \circ Y - (-1)^{|Y|} Y \circ \delta . \]
We consider a second bundle obtained from $\D$, the bundle $\Hom(\D\otimes\D,\D)$ of graded vector bundle homomorphisms from $\D\otimes \D$ to $\D$, whose sections are
\[ \Gamma(\Hom(\D\otimes\D,\D)) = \Omega^\bullet(\B) = \Gamma(\Lambda(L^\lor) \otimes \hat S(B^\lor) \otimes \Hom(B\otimes B,B)) . \]
If $\phi\in\Gamma(\Hom(\D\otimes\D,\D))$, we define
\[ \delta(\phi) = \delta \circ \phi - (-1)^{|\phi|} \phi \circ (\delta\otimes 1 + 1 \otimes \delta) . \]

Since $\Gamma(\D)$ is generated by $\Gamma(B)$ as a $C^\infty(\M)$-module, we may extend $\ka$ to $\Gamma(\D)$ by
\[ \ka(Y) = \ka(f^i) \pb i \]
for any local section $Y=f^i\pb i$ of $\D$, with $f^i$ local functions on $\M$.
This is independent of the chosen local basis since $\ka$ vanishes on $C^\infty(M)\subset C^\infty(\M)$.
Similarly, $\ka$ is extended to $\Gamma(\Hom(\D\otimes\D,\D))$.
Note that $\delta$ could have been defined in the same way, as it commutes with $\Gamma(B)\subset\Gamma(\D)$.


\subsection{Contracting homotopy}
Let $F$ be one of the following three vector bundles over $M$,
\begin{equation}
\label{eq:def-F}
  F=M\times \KK, \qquad
  F=B, \qquad
  F=\Hom(B\otimes B,B) .
\end{equation}
We have graded vector bundles 
$$F_\M=\gm{A[1]\oplus B[1] \oplus B\oplus F}$$ over $\M$ (with sections $C^\infty(\M)$, $\Gamma(\D)$, and $\Gamma(\Hom(\D\otimes\D,\D))$, respectively) and $$F_\A=\gm{A[1]\oplus F}$$ over $\A$ such that $F_\M \cong \pi^* F_\A$.

Consider the space $\Gamma(\Lambda^p A^\lor \otimes F)$ of degree $p$ sections of $F_\A$ as a positive complex concentrated in degree zero with zero differential.
Consider the space $\Omega^{p,\bullet} (\hat S(B^\lor) \otimes F)$ of sections of $F_\M$ with $A$-form degree $p$ as a positive complex graded by the $B$-form degree with $\delta$ as differential.
The pullback $\iota^*$ is a morphism between these two complexes
\[
  \vcenter{\xymatrix{
    \Omega^{p,0} (\hat S(B^\lor) \otimes F)
    \ar[r]^\delta \ar@<-.5ex>[d]_{\iota^*}
    &
    \Omega^{p,1} (\hat S(B^\lor) \otimes F)
    \ar[r]^-\delta \ar[d]_{\iota^*}
    &
    \cdots
    \\
    \Gamma(\Lambda^p A^\lor \otimes F)
    \ar[r]^0 \ar@<-.5ex>[u]_{\pi^*}
    &
    0
    \ar[r]^0
    &
    \cdots
  }} 
\]
and we have the following lemma.

\begin{lemma}\label{hom}
	The map $\ka$, together with $\pi^*:\Gamma(\Lambda^p A^\lor \otimes F)\to\Omega^{p,0} (\hat S(B^\lor) \otimes F)$, is a contracting homotopy of $(\Gamma(\M,F_\M),\delta)$ over $(\Gamma(\A,F_\A),0)$,
	i.e.\ we have
	\begin{enumerate}
	  \item $\ka \circ \ka = 0$,
	  \item $\iota^* \circ \pi^* = \id$,
	  \item \label{homotopy} $\delta\circ \ka + \ka \circ \delta = {\id} - \pi^* \circ \iota^*$ (homotopy formula).  
	\end{enumerate}
\end{lemma}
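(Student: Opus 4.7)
The plan is to reduce all three identities to a purely algebraic statement about the operators $\delta$ and $\kappa$ acting on the bigraded space $\Omega^{\bullet,\bullet}(\hat S(B^\lor))$, and then to verify the claims on a generic homogeneous element. Indeed, for each of the three choices of $F$, the space $\Gamma(\M, F_\M)$ is identified with $\Omega^{\bullet,\bullet}(\hat S(B^\lor)\otimes F)$ and, by construction, $\delta$ and $\kappa$ act as the identity on the $\Lambda^\bullet A^\lor$ and $F$ tensor factors. For $F = B$ and $F = \Hom(B\otimes B, B)$, I would first note that, although $\delta$ is defined as a Lie derivative on $\Gamma(\D)$, both $\delta$ and $\kappa$ act componentwise in a local frame $\{\tfrac{\partial}{\partial b^i}\}$: a short computation gives $[\delta, f^i \tfrac{\partial}{\partial b^i}] = \delta(f^i)\tfrac{\partial}{\partial b^i}$, and $\kappa$ is already defined componentwise. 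Hence it suffices to treat the case $F = M\times\KK$.

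Granted this reduction, parts (i) and (ii) are almost immediate. For (i), $\kappa^2$ involves the composition $i_{\partial/\partial\beta^j}\, i_{\partial/\partial\beta^i}$ tensored with the multiplication $b^j b^i$; the antisymmetry of the first factor in $(i,j)$ against the symmetry of the second forces the total to vanish. For (ii), $\pi^*$ embeds $\Gamma(\Lambda^\bullet A^\lor\otimes F)$ as the subspace of $\Omega^{\bullet,\bullet}(\hat S(B^\lor)\otimes F)$ of $B$-form degree zero and $\hat S(B^\lor)$-degree zero, while $\iota^*$ is the corresponding projection.

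The main part is the homotopy formula (iii). I would work on a homogeneous element $\alpha\otimes\beta\otimes b$ with $\beta\in\Gamma(\Lambda^qB^\lor)$ and $b\in\Gamma(S^rB^\lor)$ and expand $\delta\kappa + \kappa\delta$ using the defining formulas. Via the Koszul relation $i_{\partial/\partial\beta^j}(\beta\wedge\beta^i) = (i_{\partial/\partial\beta^j}\beta)\wedge\beta^i + (-1)^q\delta^i_j\beta$ and the Leibniz rule $\tfrac{\partial}{\partial b^j}(b^ib) = \delta^i_j b + b^i\tfrac{\partial}{\partial b^j}b$, each of the two compositions splits into a diagonal and a cross-term. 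The two cross-terms are tensorially identical, after renaming the summed indices, but carry opposite overall signs $(-1)^q$ and $(-1)^{q-1}$, so they cancel. The two diagonals simplify, via the Euler-type identities $\beta^i\wedge i_{\partial/\partial\beta^i}\beta = q\beta$ and $b^i\tfrac{\partial}{\partial b^i}b = rb$, to $\tfrac{q}{q+r}\alpha\otimes\beta\otimes b$ and $\tfrac{r}{q+r}\alpha\otimes\beta\otimes b$ respectively, which sum to $\alpha\otimes\beta\otimes b$ whenever $q+r\geq 1$. When $q=r=0$, both $\delta$ and $\kappa$ vanish on the element, while $\id-\pi^*\iota^*$ vanishes there too, since $\pi^*\iota^*$ restricts to the identity on the $(0,0)$-component.

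The main obstacle I anticipate is simply the sign and denominator bookkeeping; the crucial observation is the parity flip $(-1)^q$ versus $(-1)^{q-1}$ produced by the different orderings of interior product and exterior multiplication in $\kappa\delta$ and $\delta\kappa$, which is precisely what makes the cross-terms cancel. Once this sign arithmetic is settled, the Koszul and Euler identities deliver the three conclusions essentially for free.
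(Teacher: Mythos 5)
Your computation is correct and matches the paper's approach — the paper simply declares these identities to be "straightforward computations," and your argument (reduction to the scalar case via the componentwise action of $\delta$ and $\kappa$ on a frame $\{\partial/\partial b^i\}$, symmetric-versus-antisymmetric cancellation for $\kappa^2=0$, and the Koszul/Euler identity bookkeeping with the $(-1)^q$ versus $(-1)^{q-1}$ cross-term cancellation for the homotopy formula) is exactly the computation being alluded to. The edge cases $q=0$, $r\geq 1$ and $q=r=0$ are also handled correctly.
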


\begin{proof}
Straightforward computations.
\end{proof}

\begin{corollary}
  For all $p\in\NN$, the map 
  \[ \iota^* : 
    (\Omega^{p,\bullet}(\hat S(B^\lor)\otimes F),\delta)
    \to
    (\Gamma(\Lambda^p A^\lor\otimes F),0)
  \]
  is a quasi-isomorphism, i.e.\ 
  \begin{enumerate}
    \item $H^q \left( \Omega^{p,\bullet}(\hat S(B^\lor)\otimes F), \delta \right)=0$ for $q>0$, and
    \item $H^0 \left( \Omega^{p,\bullet}(\hat S(B^\lor)\otimes F), \delta \right)
      \simeq 
      \Gamma(\Lambda^p A^\lor \otimes F)$.
  \end{enumerate}
\end{corollary}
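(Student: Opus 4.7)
The plan is to deduce both statements formally from the contracting homotopy of Lemma~\ref{hom}, namely from the identity $\delta\circ\ka + \ka\circ\delta = \id - \pi^*\circ\iota^*$, together with the side conditions $\iota^*\circ\pi^* = \id$ and the vanishing of $\ka$ on $B$-form degree zero. Both items are then purely formal consequences, as in the standard Fedosov-style argument.

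For the vanishing of $H^q$ when $q>0$, I would take any $\delta$-closed $\omega \in \Omega^{p,q}(\hat S(B^\lor)\otimes F)$ and observe that the image of $\pi^*$ sits entirely in $B$-form degree $0$, so $\pi^*\iota^*\omega = 0$ whenever $q > 0$. The homotopy formula then reduces to
\[
  \omega \;=\; \delta(\ka\omega) + \ka(\delta\omega) \;=\; \delta(\ka\omega),
\]
exhibiting $\omega$ as $\delta$-exact.

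For $H^0$, I would show separately that $\iota^*$ is surjective and injective on cohomology. Surjectivity is immediate from $\iota^*\circ\pi^* = \id$, which already provides a genuine (not merely cohomological) section. For injectivity, take $\omega\in\Omega^{p,0}(\hat S(B^\lor)\otimes F)$ with $\delta\omega=0$; since $\omega$ lies in $B$-form degree $0$, the definition of $\ka$ forces $\ka\omega=0$, and the homotopy formula collapses to $\omega = \pi^*\iota^*\omega$. Hence $\iota^*\omega = 0$ entails $\omega = 0$, yielding the isomorphism $H^0 \cong \Gamma(\Lambda^p A^\lor\otimes F)$.

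There is no substantive obstacle: all the real work has been absorbed into Lemma~\ref{hom}, and what remains is the standard observation that a deformation retract of chain complexes is a quasi-isomorphism. The only point requiring any attention is bookkeeping of degree conventions --- checking that $\pi^*$ genuinely lands in $B$-form degree zero and that $\ka$ vanishes on that same strip --- so that both reductions of the homotopy formula go through cleanly.
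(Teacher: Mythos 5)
Your proof is correct and is exactly the intended argument: the paper states this corollary immediately after Lemma~\ref{hom} without further proof, as the standard formal consequence of the contracting homotopy $\delta\kappa+\kappa\delta=\id-\pi^*\iota^*$ together with $\iota^*\pi^*=\id$ and the vanishing of $\kappa$ (and of $\delta$) on $B$-form degree zero. The only point worth tightening is the justification that $\pi^*\iota^*\omega=0$ for $q>0$: the cleanest reason is that $\iota^*$ itself annihilates $\Omega^{p,q}$ for $q>0$ (the target complex being concentrated in degree $0$), rather than an assertion about the image of $\pi^*$.
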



\subsection{Connections}
\label{Connections}

As some of the forthcoming computations are most easily carried out in coordinates, let us give here coordinate expressions for connections and their associated tensors.
  
We defined linear coordinates $b^i$ on the fibers of $B$ in \eqref{eq:coordinates}, which we can see as local constant sections of $B^\lor$.
Let us write $b_i=\pb{i}$, $\lambda^i$ for the coordinates $\alpha^i$ and $\beta^i$ collectively, and $l_i$ for a dual basis.
  
The Christoffel symbols of an $L$-connection $\nabla$ on $B$ are defined by
\[ \nabla(b_j) = \lambda^i \Gamma_{ij}^k b_k . \]
The operator extending $\nabla$ to $B$-valued $L$-forms reads
\begin{equation*}
	\nabla = d_L + \Gamma
	= \lambda^i \rho_i^j \frac{\partial}{\partial x^j}
	- \frac12 \lambda^i \lambda^j C_{ij}^k \frac{\partial}{\partial \lambda^k}
	+ \lambda^i \Gamma_{ij}^k b_k \frac{\partial}{\partial b_j} ,
\end{equation*}
where $\rho_i^j$ are the components of the anchor map, and $C_{ij}^k$ are the structure functions of $L$.
  
The torsion tensor \eqref{eq:torsion} is
\[ 
  T (l_i,l_j)
  = T_{ij}^k b_k
\]
which yields
\[ T_{ij}^k = \Gamma_{ij}^k - \Gamma_{ji}^k - C_{ij}^k \]
for $i,j,k$ $B$-indices, and
\[ T_{ij}^k = \Gamma_{ij}^k - C_{ij}^k \]
for $i$ an $A$-index, and $j,k$ $B$-indices.

The curvature tensor is
\[ R(l_i,l_j)b_k = R_{ijk}^l b_l , \]
or $R=\frac12\lambda^i \lambda^j R_{ijk}^l b_l \frac{\partial}{\partial b_k}$, which yields
\[ 
  R_{ijk}^l = \rho(\lambda_i)(\Gamma_{jk}^l) - \rho(\lambda_j)(\Gamma_{ik}^l)
  + \Gamma_{im}^l \Gamma_{jk}^m - \Gamma_{jm}^l \Gamma_{ik}^m
  - C_{ij}^m \Gamma_{mk}^l .
\]
An easy computation shows that 
\[ \nabla^2 = R . \]
The Atiyah cocycle $\At^\nabla$ of $\nabla$ is the restriction of $R$ to $l_i\in A$ and $l_j\in B$, see~\eqref{eq:def-atiyah-cocycle-lie-pair}.

The dual $L$-connection on $B^\lor$ has Christoffel symbols $-\Gamma_{ij}^k$,
\[ \nabla(b^k) = - \lambda^i \Gamma_{ij}^k b^j . \]
Its torsion and curvature tensors  $T^\lor$ and $R^\lor$ have coordinates $-T_{ij}^k$ and $-R_{ijk}^l$.
So, for the record, we note
\begin{equation*}
\label{eq:curvature-dual-connection}
  R^\lor = - \frac12\lambda^i\lambda^j R_{ijk}^l b^k \frac{\partial}{\partial b^l} .
\end{equation*}
  
The extension of $\nabla$ to $\hat S(B^\lor)$-valued $L$-forms, i.e.\ to $C^\infty(\M)$, reads
\begin{equation}
\label{eq:connection-on-SBdual-coordinates}
	\nabla 
	= \lambda^i \rho_i^j \frac{\partial}{\partial x^j}
	- \frac12 \lambda^i \lambda^j C_{ij}^k \frac{\partial}{\partial \lambda^k}
	- \lambda^i \Gamma_{ij}^k b^j \frac{\partial}{\partial b^k} 
\end{equation}
and has again curvature $-R_{ijk}^l$.
The derivation $\nabla$ of $C^\infty(\M)$ satisfies
\begin{equation}
\label{eq:connection-on-SBdual-square-is-Rdual}
  \nabla^2 = R^\lor . \qedhere
\end{equation}

\begin{lemma}
\label{lem:torsion-free-connection}
  Let $\nabla$ be a torsion-free $L$-connection on $B$. Then 
  \begin{enumerate}
     \item \label{lem:torsion-free-connection-4} $R(a,b)b' = R(a,b')b$ for $a\in\Gamma(A)$, $b,b'\in\Gamma(B)$.
  \end{enumerate}
  Moreover, its extension $\nabla$ to $\hat S(B^\lor)$ satisfies
  \begin{enumerate}[resume]
    \item \label{lem:torsion-free-connection-1} $\nabla\circ\delta+\delta\circ\nabla = 0$;
    \item \label{lem:torsion-free-connection-2} $\nabla(R^\lor)=0$ (first Bianchi identity);
    \item \label{lem:torsion-free-connection-3} $\delta(R^\lor)=\delta\circ R^\lor + R^\lor\circ\delta=0$ (second Bianchi identity).
    \end{enumerate}
\end{lemma}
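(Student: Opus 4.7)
My plan is to handle~(1) via a cyclic Bianchi identity, to check~(2) directly on generators of $C^\infty(\M)$, and to deduce~(3) and~(4) either formally from $\nabla^2 = R^\lor$ (equation~\eqref{eq:connection-on-SBdual-square-is-Rdual}) together with~(2), or as a robust backup by direct coordinate calculation.

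For~(1), I would first establish the cyclic identity $R(l_1,l_2)\ol{l_3} + R(l_2,l_3)\ol{l_1} + R(l_3,l_1)\ol{l_2} = 0$ for any torsion-free $L$-connection on $L/A$ and all $l_i \in \Gamma(L)$: expanding each $R(l_i, l_j)\ol{l_k}$ via its definition and using $T^\nabla = 0$ in the form $\nabla_l \ol{l'} - \nabla_{l'}\ol l = \ol{[l,l']}$ twice---once for the double-$\nabla$ terms, once to rewrite $\nabla_{[l_1,l_2]}\ol{l_3}$---collapses the sum to a cyclic sum of $\ol{[[l_1,l_2],l_3]}$-terms, which vanishes by Jacobi on $L$. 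Specializing to $l_1 = a \in \Gamma(A)$ and $l_2, l_3 \in \Gamma(B)$ kills the summand involving $\ol a = 0$ and delivers~(1).

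For~(2), I would check the identity $\nabla \delta + \delta \nabla = 0$ on each of the generators $x^i, \alpha^i, \beta^i, b^i$ of $C^\infty(\M)$. Both compositions vanish trivially on $x^i, \alpha^i, \beta^i$. The only substantive case is $b^k$, where the coordinate expression~\eqref{eq:connection-on-SBdual-coordinates} together with $\delta(b^j) = \beta^j$ yield
\[ \delta\nabla(b^k) = \lambda^i \Gamma^k_{ij}\beta^j, \qquad \nabla\delta(b^k) = -\tfrac12 \lambda^i \lambda^j C^k_{ij} . \]
Splitting $\lambda = \alpha + \beta$ and using (a) $C_{aa'}^k = 0$ for $k$ a $B$-index (because $A$ is a subalgebroid of $L$), (b) $\Gamma_{ab}^k = C_{ab}^k$ for $a$ an $A$-index (from $T^\nabla = 0$), and (c) $\Gamma_{bb'}^k - \Gamma_{b'b}^k = C_{bb'}^k$ for $b, b'$ $B$-indices (again from $T^\nabla = 0$), the two expressions cancel.

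For~(3) and~(4), the quickest route is formal. Since $\nabla^2 = R^\lor$ on $C^\infty(\M)$, graded degree parity forces $[\nabla, \nabla^2] = 0$, which translates to $\nabla(R^\lor) = 0$ once $R^\lor$ is viewed as a $2$-form-valued derivation. Combining~(2), i.e.\ $[\delta, \nabla] = 0$, with graded Jacobi yields $[\delta, \nabla^2] = [\delta, \nabla]\nabla - \nabla[\delta, \nabla] = 0$, giving~(4). The point to handle carefully is the sign convention in~(4), where the paper spells out $\delta(R^\lor)$ as $\delta \circ R^\lor + R^\lor \circ \delta$ rather than the graded commutator; this is the main obstacle I anticipate, and I would dispose of it by verifying~(4) directly in coordinates, reducing it to $\lambda^i \lambda^j R_{ijk}^m \beta^k = 0$. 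The three pieces obtained by splitting $\lambda = \alpha + \beta$ then vanish respectively by flatness of the $A$-action on $B$ (so $R_{aa'k}^m = 0$), by the symmetry $R_{abk}^m = R_{akb}^m$ of~(1) contracted against antisymmetric $\beta^b\beta^k$, and by the cyclic first Bianchi for three $B$-arguments (itself a special case of the cyclic identity established for~(1)).
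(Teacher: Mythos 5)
Your proof is correct and follows essentially the same route as the paper's (very terse) argument: the paper likewise settles (1) and (2) by direct computation, gets (3) from $\nabla^2=R^\lor$, and derives (4) from (2) together with $\nabla^2=R^\lor$, which is exactly your primary formal route (your cyclic-Bianchi derivation of (1), reused for (4), is a clean way to organize the ``direct computation''). Your worry about the sign in (4) is well founded: since $R^\lor$ has even total degree, the anticommutator $\delta\circ R^\lor+R^\lor\circ\delta$ is neither a derivation nor zero (by (2) it equals $2R^\lor\circ\delta$), so the identity must be read as the graded commutator $\delta\circ R^\lor-R^\lor\circ\delta=0$, which is precisely what both the paper's derivation from (2) and your coordinate reduction to $\lambda^i\lambda^j\beta^k R^m_{ijk}=0$ establish.
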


\begin{proof}
  The first item is a direct computation.
  The second item also follows from a direct computation using the coordinate expressions above.
  The third item is the usual first Bianchi identity.
  The last item follows from the second one and the identity $\nabla^2 = R^\lor$.
\end{proof}


\subsection{Fedosov differential}

In this section, we build the homological vector field on the graded manifold $\M$ as a Fedosov differential, and use it to put a dg-Lie algebroid structure on the graded vector bundle $\D$ defined in \eqref{eq:def-D}.

All spaces of sections that contain tensor products with sections of $\hat{S}(B^\lor)$ 
are naturally graded by the polynomial degree of the tensor product. 
We will call it the \emph{$b$-degree}.
By construction, $\ka$ raises the $b$-degree by one.

The next lemma, or its idea, is key to many of the proofs here and in all previous papers using Fedosov techniques, starting with \cite{fedosov_simple_1994}.

\begin{lemma}[The Fedosov trick]
\label{lem:key}
  Let $D=-\delta + D_{\geq 0}$ be a derivation of $\Gamma(\Lambda(L^\lor)\otimes \hat S(B^\lor))$ with $D_{\geq 0}$ not decreasing the $b$-degree.
  Let $s$ be a section of $\Lambda(L^\lor)\otimes \hat S(B^\lor) \otimes F$ with $F$ one of the bundles in \eqref{eq:def-F}.
  
  If $D(s)=0$, $\ka(s)=0$, and $\iota^*(s)=0$, then $s=0$.
\end{lemma}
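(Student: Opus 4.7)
The plan is the standard Fedosov trick: combine the homotopy formula from Lemma~\ref{hom} with a filtration argument by $b$-degree. First I would apply the homotopy $\id - \pi^*\iota^* = \delta\kappa + \kappa\delta$ to $s$ and use the hypotheses $\iota^*(s)=0$ and $\kappa(s)=0$ to get
\[ s = \kappa(\delta(s)). \]
Next, using $D(s)=0$, i.e.\ $\delta(s)=D_{\geq 0}(s)$, this turns into the fixed-point identity
\[ s = \kappa(D_{\geq 0}(s)). \]

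The key step is a degree-counting argument. By construction $\kappa$ strictly raises the $b$-degree by $1$, while by hypothesis $D_{\geq 0}$ does not decrease the $b$-degree; hence the composite $\kappa \circ D_{\geq 0}$ strictly raises the minimum $b$-degree appearing in any section. Writing the decomposition $s = \sum_{k\geq 0} s_k$ by $b$-degree, I would suppose for contradiction that $s \neq 0$ and let $k_0$ be the smallest index with $s_{k_0} \neq 0$. Then the right-hand side $\kappa(D_{\geq 0}(s))$ contains only terms of $b$-degree $\geq k_0+1$, so projecting the identity $s = \kappa(D_{\geq 0}(s))$ onto the $b$-degree $k_0$ component forces $s_{k_0}=0$, a contradiction. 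Equivalently, one can run a straightforward induction on $k$ to show $s_k = 0$ for all $k$.

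There is no real obstacle here provided the extensions of $\delta$ and $\kappa$ to sections valued in $F = M\times\KK$, $B$, or $\Hom(B\otimes B,B)$ (carried out earlier in the section) preserve the $b$-degree bookkeeping used in Lemma~\ref{hom}, which they do by construction. The only point that deserves a line of comment is that, because the extra tensor factor $F$ sits outside $\hat{S}(B^\lor)$, the $b$-degree is still well-defined on $s$ and the homotopy formula still applies componentwise, so the same filtration argument goes through verbatim for all three choices of $F$ in \eqref{eq:def-F}.
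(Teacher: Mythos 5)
Your proof is correct and follows essentially the same route as the paper: apply $\kappa$ together with the homotopy formula of Lemma~\ref{hom} and the hypotheses $\kappa(s)=0$, $\iota^*(s)=0$ to obtain the fixed-point identity $s=\kappa(D_{\geq 0}(s))$, then conclude by induction on the $b$-degree. The paper's argument is just a terser version of your degree-counting step ("by iteration"), so there is nothing to add.
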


\begin{proof}
  The equation $D(s)=0$ reads
	\begin{equation*}\label{sc2}
	  - \delta (s) + D_{\geq 0}(s) = 0 .
	\end{equation*}
	Applying $\ka$, and using the assumptions $\ka(s)=0$, $\iota^*(s)=0$, and the homotopy formula, one gets
	\begin{equation*}
 		s=\ka(D_{\geq 0}(s)) .
	\end{equation*}
	By iteration, this implies $s=0$ since $\ka$ raises the $b$-degree and $D_{\geq 0}$ does not decrease it.
\end{proof}

\begin{theorem}\label{diffdef}
  Let $\nabla^L$ be a torsion-free $L$-connection on $B$ extending the $A$-action.
  There is a unique vector field
  \begin{equation*}\label{whatisA}
    X=\sum_{k\geq 2}X_k
  \end{equation*}
  with $X_k\in\Gamma(\Lambda^1L^\lor \otimes {S}^{k}(B^\lor)\otimes B)$ satisfying the equation $\ka(X)=0$ and such that the derivation
  $D : \Gamma(\Lambda^\bullet L^\lor \otimes \hat{S}(B^\lor))\to  \Gamma(\Lambda^{\bullet+1} L^\lor \otimes \hat{S}(B^\lor))$
  defined by $$D = \nabla^L - \delta + X$$ satisfies $D^2=0$.
  This solution is such that $X_2=\ka(R^{\lor})$.
\end{theorem}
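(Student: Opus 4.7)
The plan is a Fedosov-type recursion on the $b$-degree, with uniqueness handled by the contracting homotopy of Lemma~\ref{hom}. First I would expand
\[
  D^2 = R^\lor - [\nabla^L, \delta] + [\nabla^L, X] - [\delta, X] + \tfrac{1}{2}[X, X],
\]
using $(\nabla^L)^2 = R^\lor$ from \eqref{eq:connection-on-SBdual-square-is-Rdual}, and then the torsion-free identity $[\nabla^L, \delta] = 0$ from Lemma~\ref{lem:torsion-free-connection} to rewrite $D^2 = 0$ as
\[
  [\delta, X] = R^\lor + [\nabla^L, X] + \tfrac{1}{2}[X, X].
\]

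Next I would grade by $b$-degree: $\delta$ lowers it by $1$, $\nabla^L$ and $R^\lor$ preserve it, and each $X_k$ raises it by $k-1 \geq 1$. Decomposing the equation by $b$-degree yields a recursion $[\delta, X_k] = \Phi_k$ for $k \geq 2$, where $\Phi_2 = R^\lor$ and
\[
  \Phi_k = [\nabla^L, X_{k-1}] + \tfrac{1}{2}\sum_{\substack{i+j=k \\ i,j\geq 2}} [X_i, X_j] \qquad (k \geq 3)
\]
depends only on $X_2, \dots, X_{k-1}$. I would set $X_k = \ka(\Phi_k)$, which automatically enforces $\ka(X) = 0$. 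The homotopy formula of Lemma~\ref{hom} then gives $[\delta, X_k] = \Phi_k$ provided $\iota^*(\Phi_k) = 0$ and $[\delta, \Phi_k] = 0$. The first condition is a $b$-degree count: every term in $\Phi_k$ has $b$-degree at least $1$ (since $R^\lor$ has $b$-degree $1$ and each $X_i$ has $b$-degree $i \geq 2$), so it pulls back to zero along $\iota$. The case $k = 2$ recovers the stated formula $X_2 = \ka(R^\lor)$.

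The main obstacle is the Bianchi-type identity $[\delta, \Phi_k] = 0$, which I would prove by induction on $k$. The base case is the second Bianchi identity $\delta(R^\lor) = 0$ from Lemma~\ref{lem:torsion-free-connection}. For the inductive step, applying graded Jacobi to $[\delta, [\nabla^L, X_{k-1}]]$ and to each $[\delta, [X_i, X_j]]$, and using the inductive relations $[\delta, X_m] = \Phi_m$ for $m < k$ together with $[\nabla^L, \delta] = 0$, reduces $[\delta, \Phi_k]$ to an algebraic expression in $[\nabla^L, \Phi_{k-1}]$ and the various $[\Phi_i, X_j]$. Re-expanding each $\Phi_m$ via its defining formula and invoking $(\nabla^L)^2 = R^\lor$ together with the first Bianchi identity $\nabla^L(R^\lor) = 0$ from the same lemma, graded Jacobi on iterated brackets of $X_m$'s makes all contributions cancel in pairs. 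This combinatorial cancellation is the technical heart of the argument; the rest is bookkeeping.

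For uniqueness, suppose $X$ and $X'$ both satisfy the hypotheses and pick the smallest $k$ with $X_k \neq X'_k$. Since $\Phi_k$ depends only on the $X_m$ with $m < k$, which coincide with the $X'_m$, the difference $Z = X_k - X'_k$ satisfies $[\delta, Z] = 0$. Combined with $\ka(Z) = 0$ and $\iota^*(Z) = 0$ (again by $b$-degree), the homotopy formula of Lemma~\ref{hom} immediately forces $Z = 0$, contradicting the choice of $k$.
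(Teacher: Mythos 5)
Your overall strategy---a Fedosov recursion on the $b$-degree with uniqueness from the contracting homotopy---is the right one, and your uniqueness argument and the identification $X_2=\ka(R^\lor)$ are fine. The paper's proof is the same recursion, but packaged globally: it defines $X$ as the unique fixed point of $X=\ka\bigl(R^\lor+\nabla^L(X)+\tfrac12[X,X]\bigr)$ and then kills the total obstruction $C=R^\lor+\nabla^L(X)-\delta(X)+\tfrac12[X,X]$ in one stroke via Lemma~\ref{lem:key}, using $\ka(C)=0$ (by construction), $\iota^*(C)=0$ ($b$-degree), and $D(C)=0$.

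The genuine gap in your version is the Bianchi-type identity $[\delta,\Phi_k]=0$. You correctly identify it as the technical heart, but you do not prove it: ``re-expanding each $\Phi_m$ \dots graded Jacobi \dots makes all contributions cancel in pairs'' is an assertion, not an argument, and the cancellation is not a routine pairing---it mixes $[R^\lor,X_{k-2}]$, $[\nabla^L X_i,X_j]$, and triple brackets $[[X_a,X_b],X_c]$ whose Jacobi cancellation involves all cyclic permutations at once. Without this identity your homotopy-formula step only yields $\delta(X_k)=\Phi_k-\ka(\delta\Phi_k)$, so the recursion does not close. The clean fix, which is exactly what the paper's global argument encodes, is to observe that $C=D^2=\tfrac12[D,D]$, so $D(C)=\tfrac12[D,[D,D]]=0$ by the graded Jacobi identity (the only external inputs being $(\nabla^L)^2=R^\lor$ and items \eqref{lem:torsion-free-connection-1}--\eqref{lem:torsion-free-connection-2} of Lemma~\ref{lem:torsion-free-connection}); Lemma~\ref{lem:key} then gives $C=0$ without any term-by-term bookkeeping. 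If you insist on the degree-by-degree formulation, apply the same Jacobi argument to the truncated derivation $D^{(k)}=\nabla^L-\delta+\sum_{m\le k}X_m$: once $X_2,\dots,X_k$ are chosen so that $(D^{(k)})^2$ vanishes in $b$-degrees $\le k-1$, its lowest nonzero component is $\Phi_{k+1}$, and the $b$-degree $(k-1)$ component of $D^{(k)}\bigl((D^{(k)})^2\bigr)=0$ is precisely $\delta(\Phi_{k+1})=0$. Either repair makes your proof complete.
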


\begin{proof}
  Let $X=\sum_{k\geq 2}X_k$ be a vector field with $X_k\in\Gamma(\Lambda^1L^\lor \otimes {S}^{k}(B^\lor)\otimes B)$ and such that $\ka(X)=0$. 
  By definition, we have $\iota^*(X)=0$.
  
  The operator $D=\nabla^L-\delta+X$ squares to zero if and only if $\nabla^L+X$ satisfies the Maurer--Cartan equation for $-\delta$, which is equivalent to $C=0$, with
	\begin{equation*}\label{C}
	  C=R^{\lor}+\nabla^L(X)-\delta (X)+\frac{1}{2}[X,X] .
	\end{equation*}
  Here, we used the fact that $\delta(\nabla^L)=0$ and the identity $(\nabla^L)^2=R^\lor$ (see Lemma~\ref{lem:torsion-free-connection}).
  
  Assume that $C=0$.
  Applying $\ka$ to $C=0$ and using $\ka(X)=0$, $\iota^*(X)=0$ and the homotopy formula (Lemma \ref{hom} \eqref{homotopy}), we get the equation
  \begin{equation*}\label{whatA}
    X = \ka \left( R^{\lor}+\nabla^{L}(X)+\frac{1}{2}[X,X] \right)
  \end{equation*} 
  which has a unique solution in $\Gamma(\Lambda^1L^\lor \otimes \hat{S}^{\geq 2}(B^\lor)\otimes B)$. Indeed, by definition the map $\ka$ raises the $b$-degree so, starting with the term $X_2=\ka(R^{\lor})\in \Gamma(\Lambda^1L^\lor\otimes {S}^{2}(B^\lor)\otimes B)$, one may then determine the higher order terms by iteration. 

  With this solution $X$, let us show that $C=0$.
  By definition of $X$, we have $\ka(C)=0$.
  Moreover, for $b$-degree reasons, we have $\iota^*(C)=0$.
  Finally, using items two and three of Lemma~\ref{lem:torsion-free-connection} and the graded Jacobi identity, we compute that
	\begin{equation*}\label{c2}
    D(C) = \nabla^L(C)-\delta (C)+[X,C] = 0 .
	\end{equation*}
  Lemma~\ref{lem:key} then implies that $C=0$.
\end{proof}

\begin{proposition}
\label{prop:M-dg-mfd-such-that-blabla}
  The vector field $D$ turns $\M$ into a dg-manifold such that the inclusion $\iota:\gm{A[1]}\to \M$ and the projection $p:\M\to \gm{L[1]}$ are morphisms of dg-manifolds.
\end{proposition}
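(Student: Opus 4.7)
The dg-manifold part of the claim is essentially immediate: $D = \nabla^L - \delta + X$ is a derivation of degree $1$ (since each summand raises the $L$-form degree by one) and satisfies $D^2 = 0$ by Theorem~\ref{diffdef}. So my plan is to verify the two morphism conditions, $D \circ p^* = p^* \circ d_L$ and $\iota^* \circ D = d_A \circ \iota^*$, by checking them on generators of $C^\infty(\gm{L[1]})$ and $C^\infty(\M)$ respectively.

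For the projection $p$, the pullback $p^*$ embeds $C^\infty(\gm{L[1]}) = \Gamma(\Lambda L^\lor)$ into $C^\infty(\M)$ as the subspace of $b$-independent functions. Both $\delta$ and $X$ are derivations of the form $(\text{coefficients})\frac{\partial}{\partial b^i}$, hence vanish on such functions. So $D$ restricts to $\nabla^L$ on that subspace, and inspecting the coordinate formula~\eqref{eq:connection-on-SBdual-coordinates}, the last term drops out on $b$-independent functions, leaving precisely the Chevalley--Eilenberg differential $d_L$ of $L$. This settles the $p$-compatibility.

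For the inclusion $\iota$, the pullback $\iota^*$ sets $\beta^k = 0 = b^k$ and keeps only the $\Lambda A^\lor$ component. I would check commutation on the generators $f\in C^\infty(M)$, $\alpha^k$, $\beta^k$, and $b^k$. The first two are straightforward: both sides produce $\alpha^i \rho(a_i)(f)$ and $-\tfrac12\alpha^i\alpha^j C_{ij}^k$ respectively, using that the structure functions of the subalgebroid $A$ are the restriction to $A$-indices of those of $L$. The check on $b^k$ is immediate: each term of $D(b^k) = -\lambda^i \Gamma_{ij}^k b^j - \beta^k + X^k$ is killed by $\iota^*$, being proportional to $b^j$, to $\beta^k$, or lying in $\hat S^{\geq 2}(B^\lor)$.

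The delicate case, and what I would call the main substantive step, is $\beta^k$: here $\iota^*\!D(\beta^k) = -\tfrac12 \alpha^i \alpha^j C_{ij}^k$ with $k$ a $B$-index and $i,j$ $A$-indices, while the right-hand side $d_A \iota^*(\beta^k)$ is zero. The left-hand side vanishes precisely because $A$ is a Lie \emph{sub}algebroid of $L$, so $[A,A]\subset A$ forces $C_{ij}^k = 0$ in that index configuration. This is the only place where the subalgebroid hypothesis enters the verification, and it is the key step ensuring that $\iota$ is a morphism of dg-manifolds.
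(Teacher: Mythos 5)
Your proposal is correct and follows essentially the same route as the paper: the dg-manifold structure comes from Theorem~\ref{diffdef}, and both morphism conditions are verified on the generators $f$, $\lambda^k$, $b^k$ using the coordinate expression of $D$, with the subalgebroid hypothesis entering exactly where you place it (the vanishing of $C_{ij}^k$ for $i,j$ $A$-indices and $k$ a $B$-index). You in fact spell out the $p$-compatibility more explicitly than the paper, which dismisses it as ``similar calculations''; your observation that $\delta$, $X$, and the Christoffel term all annihilate $b$-independent functions is the right way to do it.
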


\begin{proof}
  Theorem \ref{diffdef} shows that $D$ turns $\M$ into a dg-manifold.
  Since $A$ and $L$ are Lie algebroids, $\gm{A[1]}$ and $\gm{L[1]}$ also have a structure of dg-manifold with the Lie algebroid differential as homological vector field.
  
  Let us show that 
  \[ \iota^* : (\Gamma(\Lambda(L^\lor)\otimes \hat S(B^\lor)), D) \to (\Gamma(\Lambda(A^\lor)), d_A) \]
  is a morphism of complexes. This will imply that $\iota=(\id_M,\iota^*)$ is a morphism of dg-manifolds.
  Since $\iota^*$ is a morphism of algebras, we only need to check the property on generators. In local coordinates \eqref{eq:coordinates}, we have $D(1\otimes b^k) = -\lambda^i \Gamma_{ij}^k b^j - \beta^k$ plus terms of $b$-degree greater than or equal to two. 
  Hence $(\iota^* \circ D)(1\otimes b^k) = 0 = d_A(\iota^*(1\otimes b^k))$.
  Then we have $D(\lambda^k\otimes 1)=-\frac12 \lambda^i\lambda^j C_{ij}^k$. Since $A$ is a subalgebroid of $L$ and $\iota^*(\alpha^i)=\alpha^i$, $\iota^*(\beta^i)=0$, we have $(\iota^*\circ D)(\lambda^k\otimes 1)=d_A (\iota^*(\lambda^i))$.
  Finally, for $f\in C^\infty(M)$, we have $D(f\otimes 1)=\lambda^i \rho_i^j \frac{\partial}{\partial^j}(f)\otimes 1$ so $(\iota^*\circ D)(f\otimes 1)=d_A(f)$.
  
  Similar calculations show that $p$ is also a morphism of dg-manifolds.
\end{proof}

The next lemma also follows from the fact that the projection $p$ is a morphism of dg-manifolds.
\begin{lemma}
\label{lem:D-integ-distrib-stable-LD}
  Let $\D$ be the graded vector bundle defined in \eqref{eq:def-D}.
  Then $\Gamma(\D)$ is stable under the Lie derivative $L_D$.
\end{lemma}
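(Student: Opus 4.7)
The plan is to exploit the characterization of $\D$ as the vertical tangent bundle of the projection $p:\M\to\gm{L[1]}$, combined with the fact, already established in Proposition~\ref{prop:M-dg-mfd-such-that-blabla}, that $p$ is a morphism of dg-manifolds. Concretely, I will identify $\Gamma(\D)$ with the subspace of derivations of $C^\infty(\M)$ that vanish on the image of $p^*:C^\infty(\gm{L[1]})\to C^\infty(\M)$. In local coordinates \eqref{eq:coordinates}, $p^*$ is the subalgebra generated by $x^i$ and $\lambda^i$, so the annihilating derivations are exactly the $C^\infty(\M)$-linear combinations of the $\partial/\partial b^i$, which is the description of $\Gamma(\D)$ used throughout Section~\ref{sec:DG-Lie-algebroids-from-Lie-pairs}.

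Next, I would translate the statement that $p$ is a morphism of dg-manifolds into the identity $D\circ p^*=p^*\circ d_L$ as maps $C^\infty(\gm{L[1]})\to C^\infty(\M)$. Given $Y\in\Gamma(\D)$ and any $g\in C^\infty(\gm{L[1]})$, the Lie derivative $L_D Y=[D,Y]$ acts on the pullback $p^*g$ by
\[
  [D,Y](p^*g)=D\bigl(Y(p^*g)\bigr)-(-1)^{|Y|}Y\bigl(D(p^*g)\bigr)
  =-(-1)^{|Y|}Y\bigl(p^*(d_L g)\bigr)=0,
\]
where the first term vanishes because $Y$ already kills $p^*g$, and the second vanishes because $p^*(d_L g)$ is again in the image of $p^*$ and $Y$ annihilates it. Hence $[D,Y]$ annihilates every element of $p^*C^\infty(\gm{L[1]})$, so it lies in $\Gamma(\D)$.

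This argument is essentially self-contained, so I do not anticipate a real obstacle: the only thing worth double-checking is that $\Gamma(\D)$ really coincides with the space of derivations annihilating $p^*C^\infty(\gm{L[1]})$, which follows from the explicit local description of $\M$ as $\gm{A[1]\oplus B[1]\oplus B}$ together with $\gm{L[1]}=\gm{A[1]\oplus B[1]}$, and from the graded Leibniz rule (so that a derivation vanishing on generators of $p^*C^\infty(\gm{L[1]})$ vanishes on the whole subalgebra). With that identification in hand, the computation above is immediate and no Fedosov-type machinery is required.
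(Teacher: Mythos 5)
Your argument is correct, and it is precisely the alternative route that the authors themselves advertise in the sentence preceding the lemma (``The next lemma also follows from the fact that the projection $p$ is a morphism of dg-manifolds'') but do not carry out. The proof actually written in the paper is different: it decomposes $D=\nabla^L-\delta+X$, observes that $\delta$ and $X$ are themselves sections of $\D$ --- so their brackets with $Y\in\Gamma(\D)$ stay in $\Gamma(\D)$ because $\D$ is an integrable distribution --- and then verifies $[\nabla^L,Y]\in\Gamma(\D)$ directly from the coordinate expression \eqref{eq:connection-on-SBdual-coordinates}, using that its first two terms involve no $\hat S(B^\lor)$-variables and that the coordinate vector fields commute. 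Your proof trades this term-by-term inspection for the single identity $D\circ p^*=p^*\circ d_L$ from Proposition~\ref{prop:M-dg-mfd-such-that-blabla}, together with the characterization of $\Gamma(\D)$ as the derivations annihilating $p^*C^\infty(\gm{L[1]})$; that characterization is indeed how $\D$ is introduced in \eqref{eq:def-D} (the kernel of $dp$) and is immediate in the coordinates \eqref{eq:coordinates}. What your version buys is independence from the specific form of $D$: any homological vector field that is $p$-related to one on the base preserves the vertical distribution, so the argument would survive any change of Fedosov data. What the paper's version buys is self-containedness at that point in the text: it uses only the explicit formula $D=\nabla^L-\delta+X$ and does not lean on the claim, only sketched in Proposition~\ref{prop:M-dg-mfd-such-that-blabla}, that $p$ is a dg-morphism. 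Note also that your computation needs only the weaker fact that $D$ preserves the subalgebra $p^*C^\infty(\gm{L[1]})$, not the full intertwining with $d_L$.
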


\begin{proof}
  For a section $Y\in\Gamma(\D)$, the Lie derivative $L_DY$ is the bracket $[D,Y]$ of derivations of $C^\infty(\M)$. Since $D=\nabla^L-\delta+X$ and since $\delta$ and $X$ can be regarded as sections of $\D$, we only need to check that $[\nabla^L,Y]\in\Gamma(\D)$.
  Consider the coordinate expression \eqref{eq:connection-on-SBdual-coordinates} of $\nabla^L$.
  The last term is in $\Gamma(\D)$. The first two do not contain variables from $\hat S(B^\lor)$, and the vector fields $\frac{\partial}{\partial x^i}$, $\frac{\partial}{\partial \lambda^j}$, and $\frac{\partial}{\partial b^k}$ commute. So the second claim is proved.
\end{proof}

\begin{proof}[Proof of Theorem \ref{thm:1st-main-theorem}]\footnote{Let us recall that this theorem was independently found by Stiénon--Xu \cite{stienon_fedosov_2016}.}
  The first statement is the content of Proposition~\ref{prop:M-dg-mfd-such-that-blabla}.
  The second statement follows directly from Lemma~\ref{lem:D-integ-distrib-stable-LD} and Lemma~\ref{lem:integrable-distribution-dg-Lie-algebroid}.
\end{proof}

Since it is a dg-Lie algebroid, $\D$ has an Atiyah class $\alpha_\D$, i.e.\ the Atiyah class of the dg-vector bundle $\D$ with respect to the dg-Lie algebroid $\D$ (see Section \ref{sssec:Atiyah-class-dg-Lie-algebroids}).
The Atiyah cocycle $\At^\nabla$ associated to any choice of $\D$-connection $\nabla$ on $\D$ is a section
\[ \At^\nabla \in \Gamma(\M,\Hom(\D \otimes \D, \D)) . \]
The operator $L_D$ on $\Gamma(\D)$ extends to $\Gamma(\Hom(\D \otimes \D,\D))$, and we simply write $D(\cdot)$ for this operator.
The goal of the next section is to show that the pullback by the inclusion $\iota:\A\to \M$ defines a quasi-isomorphism
\[ \iota^* :  (\Gamma(\Hom(\D \otimes \D,\D)), D) \to (\Gamma(\Lambda(A^\lor)\otimes \Hom(B\otimes B,B)), d_A) \]
sending, in cohomology, the Atiyah class $\alpha_\D$ to the Atiyah class $\alpha_{(L,A)}$.


\section{Quasi-isomorphisms}
\label{sec:quasi-isomorphism}

In this section, we prove our second main result, Theorem~\ref{thm:2nd-main-theorem}.
For Sections~\ref{ssec:matched-pairs} and \ref{ssec:fedosov-resolution}, we adapt the techniques of \cite[Section 10]{calaque_lectures_2011}, in which they treat the case of the matched pair $T^{1,0}X \oplus T^{0,1}X$ for $X$ a complex manifold.


\subsection{Matched pairs}
\label{ssec:matched-pairs}

Keeping the setting of Section~\ref{sec:DG-Lie-algebroids-from-Lie-pairs}, assume now  that $L$ comes from a matched pair $(A,B)$. In this case, the splitting $L/A\to L$ is canonical.
 
Since $A$ and $B$ are both Lie subalgebroids, the degree 1 operator $\nabla^L:\Omega^\bullet(\B)\to\Omega^{\bullet+1}(\B)$ splits into \emph{two} parts
\[ \nabla^L = d_A^\nabla + d_B^\nabla \]
of degree $(1,0)$ and $(0,1)$, respectively. The operator $d_A^\nabla$ coincides with $d_A$ on $\Omega^{\bullet,0}(\B)$ and squares to zero, $(d_A^\nabla)^2=0$.

Similarly, we can write 
$$X=\XA +\XB$$ 
with 
\begin{align*}
  \XA &\in \Gamma(\Lambda^1A^\lor\otimes\hat{S}^{\geq2}(B^\lor)\otimes B), \\
  \XB &\in \Gamma(\Lambda^1B^\lor\otimes\hat{S}^{\geq2}(B^\lor)\otimes B).
\end{align*}

With these decompositions, we have
\[ D= \DA + \DB  \]
with, for all $p,q\geq 0$,
\begin{equation*}\label{d2}
\begin{split}
  &\DA  : \Omega^{p,q}(\B)\to \Omega^{p+1,q}(\B) 
  , \qquad \DA  = d_A^\nabla+[\XA ,\cdot],
\end{split}
\end{equation*}
and
\begin{equation*}\label{d1}
\begin{split}
  &\DB  : \Omega^{p,q}(\B)\to \Omega^{p,q+1}(\B) 
  , \qquad \DB  = d^\nabla_B-\delta+[\XB ,\cdot] .
\end{split}
\end{equation*}
Checking the degrees  of the image of $D^2$, it is immediate that $D^2=0$ implies
\begin{equation}\label{ds}
  (\DA )^2=0 , \qquad \DA \circ \DB +\DB \circ \DA =0 , \qquad (\DB )^2=0 .
\end{equation}


\subsection{Fedosov resolution}
\label{ssec:fedosov-resolution}

The goal of this section is to prove that there is a resolution
\[ 
  0 
  \to 
  \Gamma(\Lambda^\bullet A^\lor \otimes \Hom(B\otimes B, B))
  \xrightarrow{\eta\circ\mu\circ\pi^*}
  \Omega^{\bullet,0}(\B)
  \xrightarrow{\DB }
  \Omega^{\bullet,1}(\B)
  \xrightarrow{\DB }
  \Omega^{\bullet,2}(\B)
  \xrightarrow{\DB }
  \cdots
\]
of the complex 
\begin{equation}
\label{eq:small-complex}
  (\Gamma(\Lambda^\bullet A^\lor \otimes \Hom(B\otimes B, B)),d_A)
\end{equation}
by the complexes
\[ (\Omega^{\bullet,q}(\B), d_A^\nabla) , \qquad q \geq 0 . \]
This will imply that the inclusion $\iota:\A\to\M$ induces a quasi-isomorphism between the total complex $(\Omega^{\bullet}(\B), D)$ and the complex \eqref{eq:small-complex} computing the $\Hom(B\otimes B, B)$-valued $A$-cohomology.

\begin{remark}
  In the case of a general Lie pair, the operator $D$ splits into \emph{three} parts $D^{1,0}$, $D^{0,1}$, and $D^{-1,2}$ of bidegree $(1,0)$, $(0,1)$, and $(-1,2)$, respectively.
  We still have $(D^{1,0})^2 = 0$ but the square of $D^{0,1}$ no longer vanishes. Hence $(\Omega^{\bullet,\bullet}(\B),D^{1,0},D^{0,1})$ is no longer a bicomplex and we cannot apply the spectral sequence argument of Proposition \ref{cohos} to build the quasi-isomorphism alluded to in the foregoing paragraph.
\end{remark}

Since we will use the projection $\pi^* \circ \iota^*$ many times, let us set
\begin{equation}
\label{eq:definition-sigma}
  \sigma = \pi^* \circ \iota^* .
\end{equation}

\begin{lemma}
\label{lem:iota-star-morphism-of-complexes}
  The map
  \begin{equation}
  \label{eq:iota-star-from-big-complex}
    \iota^* : 
    (\Omega^{\bullet}(\B),D)
    \to
    (\Gamma(\Lambda^\bullet A^\lor\otimes \Hom(B\otimes B,B)),d_A)
  \end{equation}
  is a morphism of complexes.
\end{lemma}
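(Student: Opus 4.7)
The plan is to check the chain-map identity $\iota^* \circ D = d_A \circ \iota^*$ summand by summand on tri-graded homogeneous sections of $\Omega^\bullet(\B)$. Under the natural identification $\Omega^\bullet(\B) \cong \Omega^{\bullet,\bullet}(\hat S(B^\lor) \otimes \Hom(B\otimes B,B))$ this space is tri-graded by $(p,q,r)$, where $p$ is the $A$-form degree, $q$ the $B$-form degree, and $r$ the polynomial $b$-degree in $\hat S(B^\lor)$. My first observation is that $\iota^*$ annihilates every piece with $q>0$ or $r>0$ and restricts to the identity on $\Omega^{\bullet,0}(\B^0) \cong \Gamma(\Lambda^\bullet A^\lor \otimes \Hom(B\otimes B,B))$. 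So the statement reduces to tracking which components of $D\omega$ survive $\iota^*$.

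The bulk of the work is a tri-degree audit of the summands of $D = \DA + \DB = \bigl( d_A^\nabla + [\XA,\cdot] \bigr) + \bigl( d_B^\nabla - \delta + [\XB,\cdot] \bigr)$. Using the coordinate expression \eqref{eq:connection-on-SBdual-coordinates} of $\nabla^L$, the definition of $\delta$, and the fact that $\XA, \XB \in \Gamma(\Lambda^1 L^\lor \otimes \hat S^{\geq 2}(B^\lor) \otimes B)$, and extending each derivation to sections of $\Hom(\D\otimes\D,\D)$ via the Lie-derivative rule, I would read off the tri-degree shifts $(+1,0,0)$, $(+1,0,\geq 1)$, $(0,+1,0)$, $(0,+1,-1)$, and $(0,+1,\geq 1)$, respectively. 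The argument then splits into two cases on the tri-degree of $\omega$.

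If $(q,r) \neq (0,0)$, no summand of $D\omega$ can sit in a slice where $\iota^*$ is nontrivial: the summands carrying a $(0,+1,\cdot)$ shift raise $q$ and cannot bring it back to zero, while the two summands preserving $q$ either preserve a positive $r$ or strictly raise it. Hence $\iota^* D\omega = 0 = d_A \iota^* \omega$. If $(q,r) = (0,0)$, then $[\XA,\omega]$ lands in strictly positive $b$-degree, $d_B^\nabla \omega$ and $[\XB,\omega]$ have positive $B$-form degree, and $-\delta\omega = 0$ for $b$-degree reasons; so $\iota^*(D\omega) = d_A^\nabla \omega$. To close I would observe that $d_A^\nabla$ agrees with $d_A$ on $\Gamma(\Lambda^\bullet A^\lor \otimes \Hom(B\otimes B,B))$: since $\nabla^L$ extends the $A$-action on $B \cong L/A$, the induced $L$-connection on $\Hom(B\otimes B,B)$ extends the induced $A$-action, and its $A$-part is exactly the Chevalley--Eilenberg differential for $\Hom(B\otimes B,B)$-valued $A$-cohomology.

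The step I expect to demand the most care is the tri-degree audit itself: since $D$ acts on $\Gamma(\Hom(\D\otimes\D,\D))$ via the Lie-derivative extension rather than as a derivation of $C^\infty(\M)$, one has to verify that this extension respects the tri-grading factor-by-factor on $\Lambda(L^\lor)$, $\hat S(B^\lor)$ and the $\Hom(B\otimes B,B)$ coefficients with the claimed shifts. Once that bookkeeping is checked, the remainder of the proof is a clean case split driven by what $\iota^*$ kills.
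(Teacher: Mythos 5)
Your proof is correct and follows essentially the same route as the paper's: the paper likewise observes that $\iota^*\circ \DB=0$ because $\DB$ raises the $B$-form degree, that $\iota^*\circ[\XA,\cdot]=0$ because $[\XA,\cdot]$ raises the $b$-degree, and that $\iota^*\circ d_A^\nabla=d_A\circ\iota^*$ since $\nabla^L$ extends the $A$-action. Your tri-degree audit is just a more explicit bookkeeping of this two-line argument.
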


\begin{proof}
  Since $\DB$ raises the $B$-form degree, we have $\iota^* \circ \DB=0$, so it is enough to prove 
  \begin{equation*}
  \label{eq:sigma-D-dA-sigma}
    \iota^* \circ \DA  = d_A \circ \iota^* .
  \end{equation*}
  Since $[\XA ,\cdot]$ raises the $b$-degree, we get
  \[ \iota^* \circ \DA  = \iota^*  \circ (d_A^\nabla + [\XA , \cdot]) = d_A \circ \iota^* . \qedhere \]
\end{proof}

The next few results will be devoted to building an explicit quasi-inverse to the map $\iota^*$ in \eqref{eq:iota-star-from-big-complex}. 

\begin{proposition}\label{cohos}
  The inclusion map $\eta$ from $\Omega^{\bullet,0}(\B)\cap\ker(\DB )$ to $\Omega^\bullet(\B)$ induces an isomorphism
  \begin{equation}\label{first}
    H^\bullet(\Omega^{\bullet,0}(\B)\cap\ker(\DB ),\DA ) 
    \to
    H^\bullet\left( \Omega^\bullet(\B), D\right) .
  \end{equation}
\end{proposition}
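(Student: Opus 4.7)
The plan is to prove this via a spectral sequence argument, whose main input is a Fedosov-type $\DB$-acyclicity in positive $B$-form degree. Equip $(\Omega^\bullet(\B), D)$ with the decreasing filtration $F^p\Omega^n(\B) = \bigoplus_{p' \geq p}\Omega^{p', n-p'}(\B)$; since $A$ has finite rank, this filtration is bounded on each total degree, so the associated spectral sequence converges to $H^\bullet(\Omega^\bullet(\B), D)$. One has $E_0^{p,q} = \Omega^{p,q}(\B)$ with $d_0 = \DB$, hence $E_1^{p,q} = H^q(\Omega^{p,\bullet}(\B), \DB)$ with $d_1$ induced by $\DA$. The statement thus reduces to the vanishing $E_1^{p,q} = 0$ for $q \geq 1$: the spectral sequence then degenerates at $E_2$ for degree reasons, and the edge map identifies with the cohomology map induced by $\eta$.

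To prove this vanishing, I would adapt the Fedosov trick. Writing $\DB = -\delta + \DB'$ with $\DB' = d_B^\nabla + [\XB,\cdot]$, observe that $d_B^\nabla$ preserves the $b$-degree while $[\XB,\cdot]$ raises it by at least $1$, since $\XB$ takes values in $\hat S^{\geq 2}(B^\lor)$. The homotopy identity $\{\delta, \ka\} = \mathrm{id}$ from Lemma~\ref{hom} holds on $\Omega^{p, \geq 1}(\B)$; combining this with the graded Leibniz rule and the relation $\{\delta, \DB'\} = (\DB')^2$ extracted from $\DB^2 = 0$, one obtains on $\Omega^{p, \geq 1}(\B)$ both
\begin{equation*}
  \{\DB, -\ka\} = \mathrm{id} - R
  \quad \text{and} \quad
  [\DB, R] = 0,
\end{equation*}
where $R := \{\DB', \ka\} = \DB'\ka + \ka\DB'$ raises the $b$-degree by at least $1$. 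Since $\Omega^{p, \bullet}(\B) = \prod_r \Omega^{p, \bullet}(S^r B^\lor \otimes \Hom(B \otimes B, B))$ is complete in the $b$-adic topology and only finitely many iterates $R^n$ contribute in each target $b$-degree, $\sum_{n \geq 0} R^n$ is a well-defined operator on $\Omega^{p, \bullet}(\B)$. For a $\DB$-cocycle $s \in \Omega^{p, q}(\B)$ with $q \geq 1$, the relation $[\DB, R] = 0$ gives $\DB(\sum_n R^n(s)) = 0$, so that by telescoping
\begin{equation*}
  \DB\Bigl(-\ka \sum_{n \geq 0} R^n(s)\Bigr) = (\mathrm{id} - R) \sum_{n \geq 0} R^n(s) = s,
\end{equation*}
exhibiting $s$ as a $\DB$-coboundary.

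The main obstacle lies in the careful derivation of the two commutation identities $\{\DB, -\ka\} = \mathrm{id} - R$ and $[\DB, R] = 0$, which require manipulations of graded commutators combined with the homotopy formula of Lemma~\ref{hom} and the relation $\{\delta, \DB'\} = (\DB')^2$. Once these are established, the $b$-adic convergence provides a contracting homotopy on $\Omega^{p, \geq 1}(\B)$, and the spectral sequence argument yields the desired isomorphism induced by $\eta$ in cohomology.
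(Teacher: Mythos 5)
Your proof is correct and follows essentially the same route as the paper: the same filtration by $A$-form degree and the same spectral sequence, with the key input being the $\DB$-acyclicity of $\Omega^{p,\bullet}(\B)$ in positive $B$-form degree, proved by exploiting that $\ka$ raises the $b$-degree while $d_B^\nabla+[\XB,\cdot]$ does not decrease it. The paper packages that acyclicity step as the recursion $b=\ka(-a+d_B^\nabla(b)+[\XB,b])$ followed by a verification of $\DB(b)=a$ via the Fedosov trick (Lemma~\ref{lem:key}); since $\ka^2=0$ gives $\ka R^n=(\ka\DB')^n\ka$, this recursion produces exactly the same primitive $b=-\ka\sum_{n\geq 0}R^n(a)$ as your contracting homotopy, so the two arguments differ only in presentation.
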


\begin{proof}
  Consider the double complex
  \[C^{\bullet,\bullet}=(\Omega^{\bullet,\bullet}(\B),\DA ,\DB ) . \]
  The right-hand side of \eqref{first} is the cohomology of the total complex $C^\bullet = (\Omega^\bullet(\B),D)$.

  We first want to compute the $\DB $-cohomology of $C^{\bullet,\bullet}$.

  Let $a$ be an element of $\Omega^{\bullet,q}(\B)$ for $q>0$ such that $\DB (a)=0$.
  We will show that the equation $a=\DB (b)$ has an explicit solution $b$.
  Since $\ka$ raises the $b$-degree, solving the recursive equation
  \begin{equation}
  \label{eq:b-recursion}
    b=\ka(-a + d^\nabla_B(b) + [\XB,b])
  \end{equation}
  yields a unique element $b\in \Omega^{\bullet,q-1}(\B)$.
  Applying $\delta$ to \eqref{eq:b-recursion} and using the homotopy formula gives
  \[ \DB (b)-a=\kappa \delta (-a + d^\nabla_B (b) + [\XB,b]) . \]
  Writing $h=\DB (b)-a\in \Omega^{\bullet,q}(\B)$, we see that $\DB (h)=0$, $\ka(h)=0$, and $\iota^*(h)=0$.
  Lemma \ref{lem:key} then shows that $h=0$.
  
  We just proved that the $\DB $-cohomology of $C^{\bullet,\bullet}$ is
  \[ H^\bullet(C^{\bullet,\bullet},\DB )=H^0(C^{\bullet,\bullet},\DB )=\Omega^{\bullet,0}(\B)\cap\ker(\DB ).\]

  Let us now consider the spectral sequence associated to the total complex $C^{\bullet}$ and the filtration $F^kC^n = \bigoplus_{p+q=n,p\geq k} C^{p,q}$.
  We have $E_0^{p,q}\cong C^{p,q}$ with $D_0=D_B$.
  Since the $\DB $-cohomology of $C^{\bullet,\bullet}$ is concentrated in degree 0, the spectral sequence stabilizes at the second page, and the cohomology of the total complex $C^{\bullet}$ is
  \[ H^\bullet(C^{\bullet},D)
  = H^\bullet (H^0(C^{\bullet,\bullet},\DB ),\DA )
  = H^\bullet (\Omega^{\bullet,0}(\B)\cap\ker(\DB ),\DA ) . \qedhere\]
\end{proof}

\begin{proposition}\label{mu}
  The formula 
  \begin{equation}\label{r}
    \mu(a) = a + \ka((\DB +\delta)(\mu(a)))
  \end{equation}
  defines an isomorphism of graded vector spaces
  \[
    \mu:
    \Omega^{\bullet,0}(\B)\cap \ker(\delta)
    \to
    \Omega^{\bullet,0}(\B)\cap \ker(\DB ) .
  \]
\end{proposition}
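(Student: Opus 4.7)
The plan is to verify in four steps that (i)~the recursion uniquely determines $\mu(a)$, (ii)~the result lies in $\Omega^{\bullet,0}(\B)$, (iii)~it is annihilated by $\DB$, and (iv)~$\mu$ admits a two-sided inverse. Throughout, the key operational fact is that $\DB + \delta = d_B^\nabla + [\XB,\cdot]$ does not decrease the $b$-degree (the first summand preserves it, and the second raises it by at least one since $\XB$ has $b$-degree $\geq 2$), while $\ka$ strictly raises the $b$-degree by one.

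For existence and uniqueness, I would iterate the recursion on $b$-degree. Elements of $\Omega^{\bullet,0}(\B)\cap\ker(\delta)$ are necessarily concentrated in $b$-degree zero, so $\mu(a)_0 = a$; each further component $\mu(a)_k$ is determined from $\mu(a)_{<k}$ by one application of $\ka\circ(\DB+\delta)$. Since $\ka$ lowers the $B$-form degree while $\DB+\delta$ raises it by one, the composition preserves $\Omega^{\bullet,0}(\B)$, so (ii) comes for free.

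The heart of the proof is (iii), which I would handle by applying the Fedosov trick (Lemma~\ref{lem:key}) to $h = \DB(\mu(a)) \in \Omega^{\bullet,1}(\B)$ using the derivation $\DB = -\delta + (d_B^\nabla + [\XB,\cdot])$. The three hypotheses are verified as follows: $\DB(h) = \DB^2(\mu(a)) = 0$ by \eqref{ds}; $\iota^*(h) = 0$ automatically since $h$ has positive $B$-form degree; and, most importantly, $\ka(h) = 0$ by combining the defining recursion $\ka((\DB+\delta)\mu(a)) = \mu(a) - a$ with the identity $\ka\delta(\mu(a)) = \mu(a) - \sigma(\mu(a))$ (the latter following from the homotopy formula of Lemma~\ref{hom} together with $\ka(\mu(a)) = 0$, valid on $\Omega^{\bullet,0}$) to give
\[ \ka(\DB\mu(a)) = (\mu(a)-a) - (\mu(a) - \sigma(\mu(a))) = \sigma(\mu(a)) - a. \]
This vanishes because the $b$-degree zero part of $\mu(a)$ is $a$ and $\mu(a)$ carries no $\beta$-factors, whence $\iota^*(\mu(a)) = a$ and $\sigma(\mu(a)) = \pi^* a = a$.

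For step (iv), I would take the candidate inverse $\sigma = \pi^*\circ\iota^*$ restricted to $\Omega^{\bullet,0}(\B)\cap\ker(\DB)$, with image landing in $\Omega^{\bullet,0}(\B)\cap\ker(\delta)$. The relation $\sigma\circ\mu = \id$ is immediate from $\iota^*(\mu(a)) = a$ proved above. For $\mu\circ\sigma = \id$, given $b\in\Omega^{\bullet,0}(\B)\cap\ker(\DB)$, one checks that $c = b - \mu(\sigma(b))$ is $\DB$-closed, is killed by $\ka$ (being in $\Omega^{\bullet,0}$), and satisfies $\iota^*(c) = 0$ via $\iota^*\pi^* = \id$; a second application of Lemma~\ref{lem:key} then forces $c = 0$. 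The main obstacle is the computation $\ka(\DB\mu(a)) = \sigma(\mu(a)) - a$ in step (iii); once this calibration between $\ka$, $\delta$, and the recursion is in place, the rest is standard Fedosov bookkeeping.
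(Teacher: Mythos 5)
Your proposal is correct and follows essentially the same route as the paper's own proof: existence/uniqueness by recursion on the $b$-degree, the calibration $\ka(\DB\mu(a))=\sigma(\mu(a))-a=0$ obtained from the homotopy formula and $\ka(\mu(a))=0$, the Fedosov trick (Lemma~\ref{lem:key}) to conclude $\DB(\mu(a))=0$, and $\sigma$ as two-sided inverse with a second application of Lemma~\ref{lem:key} for surjectivity. No gaps.
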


\begin{proof}
  Let $a\in\Omega^{\bullet,0}(\B)\cap \ker(\delta)$. 
  We need to show first that the recursion relation \eqref{r} actually defines a unique element $\mu(a)$ and then that
  \begin{equation*}\label{well}
    \DB (\mu(a))=0.
  \end{equation*}
  
  As before, observe that $\kappa$ raises the $b$-degree and the operator $\DB +\delta=d^\nabla_B+[\XB ,\cdot]$ preserves it.
  Consequently, equation \eqref{r} has for each $a$ a unique solution $\mu(a)$ such that
  \begin{equation}
  \label{eq:sigma-left-inverse-of-mu}
    \sigma(\mu(a))=a .
  \end{equation}

  Let us show that $\DB (\mu( a))=0$.
  Notice that $\ka$ vanishes on the image of $\mu$.
  Using the definition of $\mu$ and the homotopy formula one has that
  \begin{equation*}\label{dyo}
    \begin{split}
      \ka(\DB (\mu( a)))
      & = \ka\left((\DB +\delta)(\mu( a))\right)-\ka\delta(\mu( a)) \\
      & = \mu( a)- a + \delta \ka(\mu( a))-\mu( a)+\sigma(\mu( a)) \\
      & = 0 .
    \end{split}
  \end{equation*}
  We have $\DB (\DB (\mu(a)))=0$
  and $\DB (\mu( a))\in\Omega^{\bullet,1}(\B)\subset\ker(\iota^*)$.
  It follows from Lemma \ref{lem:key} that $\DB (\mu( a))=0$.
  
  Let us show that $\mu$ is an isomorphism.
  Since $\mu$ has left-inverse $\sigma$ (see \eqref{eq:sigma-left-inverse-of-mu}), it is injective.
  To prove that $\mu$ is surjective, observe that for $b\in\Omega^{\bullet,0}(\B)\cap\ker(\DB )$ one has $\sigma(\mu(\sigma(b)))=\sigma(b)$.
  But then $b=\mu(\sigma(b))$ by Lemma \ref{lem:key}.
\end{proof}

\begin{proposition}\label{theo}
  The map $\mu$ is an isomorphism of complexes
  \begin{equation*}\label{inv2}
    \mu:
    (\Omega^{\bullet,0}(\B)\cap \ker(\delta),d_A^\nabla)
    \to
    (\Omega^{\bullet,0}(\B)\cap \ker(\DB ),\DA ) .
  \end{equation*}
\end{proposition}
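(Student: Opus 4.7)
The plan is to leverage Proposition~\ref{mu}: it already produces $\mu$ as a bijection of graded vector spaces from $\Omega^{\bullet,0}(\B)\cap\ker(\delta)$ to $\Omega^{\bullet,0}(\B)\cap\ker(\DB)$, with two-sided inverse given by the restriction of $\sigma=\pi^*\iota^*$. Hence the only remaining task is to show that $\mu$ intertwines $d_A^\nabla$ and $\DA$, or equivalently, after applying $\sigma$, to establish
\[
  \sigma\circ\DA \;=\; d_A^\nabla\circ\sigma
\]
on $\Omega^{\bullet,0}(\B)$.

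Before doing so, two preparatory checks are needed. First, $\Omega^{\bullet,0}(\B)\cap\ker(\delta)$ coincides with $\Gamma(\Lambda^\bullet A^\lor\otimes\Hom(B\otimes B,B))$ (the zero $b$-degree part), and $d_A^\nabla$ preserves this subspace: inspecting the coordinate formula~\eqref{eq:connection-on-SBdual-coordinates}, the term that multiplies $\partial/\partial b^k$ by $b^j$ annihilates functions constant in $b$, so $d_A^\nabla$ restricts there to the Koszul-type differential with coefficients in $\Hom(B\otimes B,B)$. Second, $\DA$ preserves $\Omega^{\bullet,0}(\B)\cap\ker(\DB)$: it preserves $B$-form degree by construction, and since $\DA\DB+\DB\DA=0$ by~\eqref{ds}, $\DB(\DA\mu(a))=-\DA(\DB\mu(a))=0$.

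The key identity $\sigma\DA=d_A^\nabla\sigma$ will then follow by chaining two intertwinings. On $\Omega^{\bullet,0}(\B)$, the operator $\DB$ raises the $B$-form degree by one and is therefore annihilated by $\iota^*$ (which sets $\beta=0$); Lemma~\ref{lem:iota-star-morphism-of-complexes} consequently specializes to $\iota^*\DA=d_A\iota^*$. Moreover, $\pi^*\circ d_A=d_A^\nabla\circ\pi^*$ on $\Gamma(\Lambda^\bullet A^\lor\otimes\Hom(B\otimes B,B))$: the hypothesis that $\nabla^L$ extends the $A$-action forces the $A$-component of the induced connection on $\Hom(B\otimes B,B)$ to coincide with the intrinsic $A$-module action, so $d_A^\nabla$ restricted to this subspace is $d_A$. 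Combining,
\[
  \sigma\DA=\pi^*\iota^*\DA=\pi^*d_A\iota^*=d_A^\nabla\pi^*\iota^*=d_A^\nabla\sigma.
\]

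To conclude, I would evaluate at $\mu(a)$ for $a\in\Omega^{\bullet,0}(\B)\cap\ker(\delta)$: since $\mu(a)\in\ker(\DB)$, Proposition~\ref{mu} gives $\sigma\mu(a)=a$, so the displayed identity yields $\sigma(\DA\mu(a))=d_A^\nabla(a)$; reapplying $\mu$ and invoking $\mu\sigma=\id$ on $\Omega^{\bullet,0}(\B)\cap\ker(\DB)$ (which contains $\DA\mu(a)$ by the second preparatory check) produces $\DA\mu(a)=\mu(d_A^\nabla a)$. The main obstacle I anticipate is verifying $\pi^*d_A=d_A^\nabla\pi^*$: this rests essentially on the extension-of-$A$-action hypothesis, without which the bookkeeping of how $\nabla^L$ induces a connection on $\Hom(B\otimes B,B)$ would not collapse to the intrinsic $A$-module structure.
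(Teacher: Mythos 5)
Your proof is correct and follows essentially the same route as the paper's: both arguments reduce to checking that $\DA\mu(a)$ and $\mu(d_A^\nabla a)$ lie in $\Omega^{\bullet,0}(\B)\cap\ker(\DB)$ (via the anticommutation \eqref{ds}) and have the same image under $\sigma$. The only cosmetic difference is that you conclude by invoking the already-established identity $\mu\circ\sigma=\id$ on that subspace, whereas the paper applies Lemma~\ref{lem:key} directly to the difference --- but the former is itself a consequence of the latter, so the mechanism is identical.
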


\begin{proof}
  Let $a\in\Omega^{\bullet,0}(\B)\cap\ker(\delta)$.
  We will show that $\DA (\mu(a))=\mu(d_A^\nabla(a))$ by applying Lemma \ref{lem:key} to their difference.
  
  For $B$-form degree reasons, $\ka$ vanishes on both elements.
  By definition of $\mu$, $\DB\circ\mu=0$ so $\DB (\mu(d_A^\nabla(a)))=0$.
  By \eqref{ds},
  \[\DB (\DA (\mu(a)))=-\DA (\DB (\mu(a)))=0.\]
  Furthermore, since $[\XA ,\operatorname{\cdot}{}]$ raises the $b$-degree, we have
  \begin{equation*}
    \begin{split}
      \sigma(\DA (\mu(a)))
      & = \sigma\circ{\sgn}\left(d_A^\nabla(\mu(a))+[\XA ,\mu(a)]\right)
      = \sigma(d_A^\nabla(\mu((a))) \\
      & = \sigma\left(d_A^\nabla(a)+d_A^\nabla(k(\DB +\delta)(\mu(a)))\right) \\
      &= \sigma(d_A^\nabla(a))
      = d_A^\nabla(a)
      = \sigma(\mu(d_A^\nabla(a))).
    \end{split}
  \end{equation*}
  So Lemma \ref{lem:key} gives $\DA (\mu(a))=\mu(d_A^\nabla(a))$.
\end{proof}

\begin{proposition}\label{prop:iota-star-quasi-isomorphism}
  The map \eqref{eq:iota-star-from-big-complex} is a quasi-isomorphism.
\end{proposition}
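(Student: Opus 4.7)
The plan is to build an explicit quasi-inverse to $\iota^*$ from the constructions already established, and then use it to conclude. Combining the pullback $\pi^*$, the map $\mu$ of Proposition \ref{theo}, and the inclusion $\eta$ of Proposition \ref{cohos}, I will produce a chain map
\[
  \eta \circ \mu \circ \pi^* : (\Gamma(\Lambda^\bullet A^\lor \otimes \Hom(B\otimes B, B)), d_A) \to (\Omega^\bullet(\B), D)
\]
which is a quasi-isomorphism.

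First I would observe that $\pi^*$ restricts to an isomorphism of graded vector spaces
\[
  \pi^* : \Gamma(\Lambda^\bullet A^\lor \otimes \Hom(B\otimes B, B)) \xrightarrow{\sim} \Omega^{\bullet,0}(\B) \cap \ker(\delta),
\]
since $\Omega^{\bullet,0}(\B) \cap \ker(\delta)$ is precisely the subspace of elements with no polynomial dependence on the $b^i$ (any $b^i$ would be sent by $\delta$ to $\beta^i \neq 0$). In the matched-pair setting this isomorphism intertwines $d_A$ with $d_A^\nabla$: on elements that are constant in $\hat S(B^\lor)$, the operator $d_A^\nabla$ reduces to the Chevalley--Eilenberg differential of $A$ acting on the $A$-module $\Hom(B\otimes B, B)$, because $\nabla^L$ extends the $A$-action. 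Then Proposition \ref{theo} gives an isomorphism of complexes from $(\Omega^{\bullet,0}(\B)\cap \ker(\delta), d_A^\nabla)$ to $(\Omega^{\bullet,0}(\B) \cap \ker(\DB), \DA)$, and Proposition \ref{cohos} states that the inclusion $\eta$ of the latter into $(\Omega^\bullet(\B), D)$ is a quasi-isomorphism. Composing, $\eta \circ \mu \circ \pi^*$ is a quasi-isomorphism.

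Next I would compute $\iota^* \circ \eta \circ \mu \circ \pi^*$. By construction $\iota^* \circ \eta = \iota^*$, and equation \eqref{eq:sigma-left-inverse-of-mu} reads $\sigma \circ \mu = \pi^* \circ \iota^* \circ \mu = \id$ on $\Omega^{\bullet,0}(\B) \cap \ker(\delta)$. Since $\pi^*$ is injective, this forces $\iota^* \circ \mu = (\pi^*)^{-1}$ on its domain, and therefore $\iota^* \circ \eta \circ \mu \circ \pi^* = \id$ on $\Gamma(\Lambda^\bullet A^\lor \otimes \Hom(B\otimes B, B))$.

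Finally I conclude with a purely formal argument: Lemma \ref{lem:iota-star-morphism-of-complexes} says $\iota^*$ is a chain map, $\eta\circ\mu\circ\pi^*$ is a quasi-isomorphism, and the identity $\iota^*\circ(\eta\circ\mu\circ\pi^*)=\id$ passes to cohomology, where it shows that the induced map $H(\iota^*)$ is a left inverse of the isomorphism $H(\eta\circ\mu\circ\pi^*)$. Hence $H(\iota^*)$ is itself an isomorphism. The only step needing real care is the verification that $\pi^*$ intertwines $d_A$ and $d_A^\nabla$ on $b$-independent sections; everything else is a formal consequence of the preceding propositions and the explicit section formula \eqref{eq:sigma-left-inverse-of-mu}.
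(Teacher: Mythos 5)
Your proof is correct and follows essentially the same route as the paper: both construct the quasi-inverse $\eta\circ\mu\circ\pi^*$, verify $\iota^*\circ(\eta\circ\mu\circ\pi^*)=\id$ via \eqref{eq:sigma-left-inverse-of-mu}, and conclude by the formal left-inverse-of-an-isomorphism argument. Your extra care in identifying $\Omega^{\bullet,0}(\B)\cap\ker(\delta)$ with the Chevalley--Eilenberg complex and checking that $\pi^*$ intertwines $d_A$ with $d_A^\nabla$ is exactly the point the paper compresses into ``by definition of $\pi$ and $d_A^\nabla$.''
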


\begin{proof}
  Consider the map $\eta\circ\mu\circ\pi^*$ (see Propositions~\ref{cohos} and \ref{theo}).
  On the one hand, it is a right-inverse of $\iota^*$. 
  Indeed, applying $\iota^*$ on the left and $\pi^*$ on the right of \eqref{eq:sigma-left-inverse-of-mu} yields $\iota^*\circ(\eta\circ\mu\circ\pi^*)=\id$.
  On the other hand, we have seen in Propositions~\ref{cohos} and \ref{theo} that $\eta$ and $\mu$ are quasi-isomorphisms, and by definition of $\pi$ and $d_A^\nabla$, $\pi^*$ is an isomorphism of complexes onto the domain of $\mu$. 
  As a result, $\iota^*$ is also a quasi-isomorphism, with quasi-inverse $\eta\circ\mu\circ\pi^*$.
\end{proof}


\subsection{Atiyah classes}
\label{ssec:Atiyah-classes}

We begin with a short lemma about $\D$-connections on $\D$.

\begin{lemma}
\label{lem:connection-is-bilinear}
  Let $\nabla$ be a $\D$-connection on $\D$.
  Then there exists a unique $S \in \Gamma(\Hom(\iota^*\D \otimes \iota^*\D, \iota^*\D)) \cong \Gamma(\Lambda(A^\lor) \otimes \Hom(B\otimes B,B))$ such that
  \[ \iota^*(\nabla_XY) = S(\iota^*X,\iota^*Y) \]
  for all $X,Y\in\Gamma(\D)$ of $b$-degree zero.
\end{lemma}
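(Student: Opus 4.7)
The plan is to reduce the verification to a local-frame computation powered by the two Leibniz rules for the $\D$-connection $\nabla$. First, I would note that the $b$-degree zero part of $\Gamma(\D) = \Gamma(\Lambda(L^\lor)\otimes \hat{S}(B^\lor)\otimes B)$ is precisely $\Gamma(\Lambda(L^\lor)\otimes B)$. Locally, picking a frame $b_1,\dots,b_s$ of $B$ (which, as sections of $\D\subset T\M$, are the vertical vector fields $\pb{i}$), any such $X$ can be written as $X=f^i b_i$ with $f^i\in\Gamma(\Lambda(L^\lor))$, and similarly $Y=g^j b_j$.

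Next, I would apply both Leibniz rules:
\begin{align*}
  \nabla_{f^i b_i}(g^j b_j)
  &= f^i\,\nabla_{b_i}(g^j b_j) \\
  &= f^i\bigl((-1)^{|b_i||g^j|} g^j\,\nabla_{b_i} b_j + \rho(b_i)(g^j)\,b_j\bigr).
\end{align*}
The crucial observation is that $\rho(b_i)=\pb{i}$ annihilates anything in $\Gamma(\Lambda(L^\lor))$, since such elements depend only on the coordinates $x^i,\alpha^i,\beta^i$ and not on the $b^i$; as $|b_i|=0$ as well, the formula collapses to $\nabla_X Y = f^i g^j\,\nabla_{b_i}b_j$. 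Applying the algebra morphism $\iota^*$ then yields
\[ \iota^*(\nabla_X Y) \;=\; \iota^*(f^i)\,\iota^*(g^j)\,\iota^*(\nabla_{b_i}b_j), \]
which depends only on the ``$A$-parts'' $\iota^*(f^i),\iota^*(g^j)$, i.e.\ on $\iota^*X$ and $\iota^*Y$, together with the local quantities $S_{ij}:=\iota^*(\nabla_{b_i}b_j)\in\Gamma(\Lambda(A^\lor)\otimes B)$.

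I would then define $S$ locally by $S(b_i,b_j):=S_{ij}$ and extend by $\Gamma(\Lambda(A^\lor))$-bilinearity. The one point requiring verification is that $S_{ij}$ transforms tensorially under a change of frame $\tilde b_i = A_i^k b_k$ with $A_i^k\in C^\infty(M)$: running the same Leibniz argument and using $\pb{k}(A_j^l)=0$, one obtains $\iota^*(\nabla_{\tilde b_i}\tilde b_j) = A_i^k A_j^l\, S_{kl}$, so the locally defined $S$'s glue to a global section of $\Lambda(A^\lor)\otimes \Hom(B\otimes B,B)$. Uniqueness is immediate: any $S'$ satisfying the identity must coincide with $S$ on the generators $(b_i,b_j)$, hence on all of $\Gamma(\iota^*\D\otimes\iota^*\D)$ by bilinearity.

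I do not anticipate a serious obstacle; the entire argument is powered by the single elementary fact that the vertical $b$-derivations kill $\Lambda(L^\lor)$-valued functions. The only mild care needed is in the patching step, which goes through precisely because the transition matrices for $B$ live in $C^\infty(M)$ and are therefore independent of the $b^i$.
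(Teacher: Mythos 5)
Your argument is correct, but it takes a genuinely different route from the paper's. The paper avoids frames entirely: it exhibits one explicit reference connection $\nabla^0$ determined by $\nabla^0_{\partial_i}\partial_j=0$ on the constant sections, observes that $\iota^*(\nabla^0_XY)=0$ whenever $X,Y$ have $b$-degree zero, and then invokes the affine structure of the space of connections --- any $\nabla$ equals $\nabla^0+T$ with $T\in\Gamma(\Hom(\D\otimes\D,\D))$ --- so that $S=\iota^*(T)$ does the job; existence, globality, and tensoriality all come for free from $T$ already being a bundle map. You instead prove tensoriality of $\iota^*\circ\nabla$ directly: writing $X=f^ib_i$, $Y=g^jb_j$ and using both Leibniz rules, the anchor term $\rho(b_i)(g^j)$ dies because the vertical derivations $\pb{i}$ annihilate $\Gamma(\Lambda(L^\lor))$, leaving $\iota^*(\nabla_XY)=\iota^*(f^i)\iota^*(g^j)\,\iota^*(\nabla_{b_i}b_j)$, after which you must (and do) check that $S_{ij}=\iota^*(\nabla_{b_i}b_j)$ glues, using that the transition matrices of $B$ are independent of the $b^k$. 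Note that the well-definedness of the paper's $\nabla^0$ rests on exactly the same elementary fact you isolate, so the two proofs share the same engine; yours makes the mechanism and the uniqueness argument more explicit at the cost of a frame-dependence and patching step, while the paper's is shorter and coordinate-free.
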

\begin{proof}
  There is an explicit $\D$-connection $\nabla^0$ on $\D$ defined on constant sections
  \begin{equation}
  \label{eq:constant-sections-of-D}
    \partial_i 
    := 1 \otimes 1 \otimes 1 \otimes \frac{\partial}{\partial b^i} 
    \in \Gamma(\Lambda(A^\lor) \otimes \Lambda(B^\lor) \otimes \hat S(B^\lor) \otimes B)
  \end{equation}  
  by
  \[ \nabla^0_{\partial_i} \partial_j = 0 \]
  and extended by linearity and the Leibniz rule.
  It is easy to check that $\nabla^0$ is well-defined and satisfies $\iota^*(\nabla^0_XY)=0$ for all sections $X,Y\in\Gamma(\D)$ of $b$-degree zero.
  
  Now any $\D$-connection on $\D$ is of the form $\nabla = \nabla^0+T$ for some $T\in\Gamma(\Hom(\D\otimes\D,\D))$.
  The result follows with $S=\iota^*(T)$.
\end{proof}

We can now turn to the proof of our second main theorem.

\begin{proof}[Proof of Theorem \ref{thm:2nd-main-theorem}]
  The fact that $\iota^*$ is a quasi-isomorphism was proved in Proposition~\ref{prop:iota-star-quasi-isomorphism}.
  
  Let us now prove the statement about the Atiyah class.
  Recall that $\iota$ is the inclusion of $\A$ in $\M$ and that the bundle $\gm{A[1]\oplus B}\to\gm{A[1]}$ is isomorphic to the pullback bundle of $\D\to\M$ by $\iota$.
  Hence, we have pullback maps
  \[ \iota^* : \Gamma(\D) \to \Gamma(\Lambda(A^\lor) \otimes B) \]
  and
  \[ \iota^* : \Gamma(\Hom(\D \otimes \D, \D)) \to \Gamma(\Lambda(A^\lor) \otimes \Hom(B \otimes B, B)) . \]
  
  For all $X,Y\in\Gamma(\D)$, there is a commuting square
  \begin{equation*}
    \xymatrixcolsep{6pc}
    \vcenter{\xymatrix{
      \Gamma(\Hom(\D \otimes \D, \D)) \ar[r]^-{\ev_{X,Y}} \ar[d]_{\iota^*}
      & \Gamma(\D) \ar[d]^{\iota^*} \\
      \Gamma(\Lambda (A^\lor) \otimes \Hom(B \otimes B, B)) \ar[r]_-{\ev_{\iota^*(X),\iota^*(Y)}}
      & \Gamma(\Lambda (A^\lor) \otimes B)
    }} 
  \end{equation*}
  where $\ev$ denotes the evaluation.
  
  Let $\nabla$ be a $\D$-connection on $\D$.
  Recall the definitions of the Atiyah cocycles from \eqref{eq:def-atiyah-cocycle-lie-pair} and \eqref{eq:definition-Atiyah-cocycle-dgLiealgebroid}.
  We are going to check that
  \begin{equation}
  \label{eq:atiyahs-play-together}
    \iota^*(\At^\nabla(X,Y)) = \At^{\nabla^L}(\iota^*X,\iota^*Y) + d_A(S)(\iota^*X,\iota^*Y),
  \end{equation}
  for all $X,Y\in\Gamma(\D)$ and for some $S\in\Gamma(\Hom(\iota^*\D \otimes \iota^*\D, \iota^*\D))$.
  This will imply that $\iota^*(\alpha_\D)=\alpha_{(L,A)}$ in cohomology.
    
  Since the Atiyah cocycle is $C^\infty(\M)$-linear, we may restrict to constant sections \eqref{eq:constant-sections-of-D}. 
  On such elements, we have
  \begin{equation*}\label{eq:atiyah-on-partial-ij}
    \At^\nabla(\partial_i, \partial_j)
    = D \nabla_{\partial_i} \partial_j - \nabla_{D\partial_i}\partial_j - \nabla_{\partial_i} D\partial_j .
  \end{equation*}
  Recall that $D=\nabla^L-\delta+[X,\cdot]$. We will now consider the contribution of each term of $D$ separately.
  
  Since $\delta$ lowers the $b$-degree, $\delta(\partial_i)=0$.
  Moreover, since $\delta$ raises the $B$-form degree, $\delta(\nabla_{\partial_i}\partial_j)$ is of $B$-form degree at least one (actually, one), so $\iota^*$ vanishes on it.
  Hence, the terms involving $\delta$ in $\iota^*(\At^\nabla(\partial_i,\partial_j))$ vanish.
  
  Consider now the terms involving $\nabla^L$.
  Recall that $\nabla^L = d_A^\nabla + d_B^\nabla$.
  Since $\nabla$ is linear in the first argument and $d_B^\nabla$ raises the $B$-form degree by one, we obviously have
  \[ \iota^*(d_B^\nabla(\nabla_{\partial_i}\partial_j)) = 0 \quad \text{and} \quad \iota^*(\nabla_{d_B^\nabla(\partial_i)}\partial_j) = 0 . \]
  The term $\iota^*(\nabla_{\partial_i}d_B^\nabla(\partial_j))$ vanishes for the same reason using the Leibniz identity
  \[ \nabla_X(fY) = X(f) Y + (-1)^{|f||X|} f \nabla_XY \]
  where $X,Y\in\Gamma(\D)$ and $f\in C^\infty(\M)$, and the fact that sections of $\D$ only act on the $\hat S(B^\lor)$ part of $f$ (remember that $\D$ is a subalgebroid of $T\M$, so its anchor map is the inclusion).
  As a result, we get
  \begin{align*}
    \iota^* \Big(
    \nabla^L \nabla_{\partial_i}\partial_j
    - \nabla_{\nabla^L \partial_i} \partial_j
    - \nabla_{\partial_i} \nabla^L\partial_j
    \Big)
    &=
    \iota^* \Big(
    d_A^\nabla \nabla_{\partial_i}\partial_j
    - \nabla_{d_A^\nabla \partial_i} \partial_j
    - \nabla_{\partial_i} d_A^\nabla\partial_j
    \Big) \\
    &= d_A(S)(\iota^*\partial_i,\iota^*\partial_j)
  \end{align*}
  for some $S$ given by Lemma~\ref{lem:connection-is-bilinear}.
  
  Let us finally turn to the elements involving $X$.
  Recall that $X=\sum_{k\geq 2} X_k$ with $X_k\in\Omega^{\bullet,\bullet}(\B^{k})$.
  Consequently, since $\iota^*$ vanishes on elements with non-zero $b$-degree, we have
  \[ 
    \iota^*([X,\nabla_{\partial_i}\partial_j]) = 0 
    \quad \text{and} \quad 
    \iota^*(\nabla_{[X,\partial_i]}\partial_j) = 0 . 
  \]
  Now notice that $[X,\partial_j]$ is equal to $[X_2,\partial_j]$ plus terms of $b$-degree greater than 2 that vanish under $\iota^*$.
  We may decompose $X_2$ as 
  \[ X_{2,A}+X_{2,B} \in \Omega^{1,0}(\B^2) \oplus \Omega^{0,1}(\B^2) \]
  and notice that $\iota^*(\nabla_{\partial_i}[X_{2,B},\partial_j])$ vanishes for $B$-form degree reasons.
  Recall from Theorem~\ref{diffdef} that $X_2=\ka(R^{\lor})$.
  So $X_{2,A}$ takes the explicit form
  \[ X_{2,A} = \frac12 R_{kij}^l \alpha^k \otimes 1 \otimes b^ib^j \otimes \frac{\partial}{\partial b^l} . \]
  Hence, using Lemma~\ref{lem:torsion-free-connection}~(\ref{lem:torsion-free-connection-4}), we get
  \begin{align*}
    \iota^* (\nabla_{\partial_i} [X,\partial_j])
    &= -R_{kij}^l \alpha^k \otimes \frac{\partial}{\partial b^l} 
    = -\At^{\nabla^L}(\partial_i,\partial_j) .
  \end{align*}
  
  Putting everything together, we have obtained \eqref{eq:atiyahs-play-together}.
\end{proof}


\bibliographystyle{amsabbrvlinks}
\bibliography{Atiyah}

\end{document}